\documentclass[english]{article}
\usepackage{comment} 
\usepackage[utf8]{inputenc}
\usepackage[T1]{fontenc}
\usepackage{graphicx} 
\usepackage{amsmath,amssymb,amsfonts}%
\usepackage[margin=3cm]{geometry}
\usepackage{amsthm}
\usepackage{dsfont}
\usepackage{babel}
\usepackage{subcaption}
\usepackage[linesnumbered,ruled,vlined]{algorithm2e}
\usepackage{xcolor}
\usepackage{csquotes}
\usepackage{authblk}
\usepackage{esvect}
\usepackage{mathtools}
\usepackage{xparse}

\title{From entropic transport to martingale transport, and applications to model calibration}

\author[1]{Jean-David Benamou}
\author[1,2]{Guillaume Chazareix}
\author[2]{Grégoire Loeper}

\affil[1]{INRIA Paris}
\affil[2]{BNP Paribas Global Markets}

\NewDocumentCommand{\expect}{ e{_} e{^} s o >{\SplitArgument{1}{|}}m }{%
  \operatorname{\mathbb{E}}
  \IfValueT{#1}{{\!}_{#1}}
  \IfValueT{#2}{{\!}^{#2}}
  \IfBooleanTF{#3}{
    \expectarg*{\expectvar#5}%
  }{
    \IfNoValueTF{#4}{
      \expectarg{\expectvar#5}%
    }{
      \expectarg[#4]{\expectvar#5}%
    }%
  }%
}
\NewDocumentCommand{\expectvar}{mm}{%
  #1\IfValueT{#2}{\nonscript\;\delimsize\vert\nonscript\;#2}%
}
\DeclarePairedDelimiterX{\expectarg}[1]{[}{]}{#1}

\newcommand*\diff{\mathop{}\!\mathrm{d}}

\usepackage[backend=bibtex,style=numeric,doi=false,eprint=false,url=false]{biblatex}
\addbibresource{bibli.bib}
\addbibresource{bibben.bib}
\addbibresource{newbib.bib}
\DefineBibliographyStrings{english}{
  andothers = {et\addabbrvspace al\adddot},
}

\newcommand{\as}{\text{a.s.}}
\newcommand{\Nte}[1]{{#1}  } 
    
\newcommand{\SP}[3]{\langle #1  ,  #2   \rangle_{#3} }

\newcommand{\RR}{{W}}

\NewDocumentCommand{\dint}{e{_} e{^}}{%
  {\displaystyle \int\IfValueT{#1}{_{#1}}\IfValueT{#2}{^{#2}}}
}

\newcommand{\bs}{{\overline{\sigma}}}
\newcommand{\E}{\mathbb{E}}
\newcommand{\F}{\mathcal{F}}
\newcommand{\PP}{\mathbb{P}}

\NewDocumentCommand{\bPP}{e{_} e{^}}{%
  {\overline{\PP\IfValueT{#1}{_{#1}}\IfValueT{#2}{^{#2}}}}
}

\newcommand{\R}{\mathbb{R}}

\newcommand{\Pc}{\mathcal{P}}

\newcommand{\Mc}{\mathcal{M}}
\newcommand{\Gc}{\mathcal{G}}
\newcommand{\Kc}{\mathcal{K}}
\newcommand{\Fc}{\mathcal{F}}
\newcommand{\Ic}{\mathcal{I}}
\newcommand{\Sc}{\mathcal{S}}

\newcommand{\Xc}{\mathcal{X}}

\newcommand{\C}{\mathcal{C}}

\DeclareMathOperator{\KL}{KL}

\renewcommand{\vv}[1]{{#1}}

\newtheorem{proposition}{Proposition}[section]
\newtheorem{definition}{Definition}[section]
\newtheorem{theorem}{Theorem}[section]
\newtheorem{remark}{Remark}[section]
\newtheorem{lemma}{Lemma}[section]

\begin{document}

\maketitle
\begin{abstract}
  We propose a discrete time formulation of the semi-martingale optimal transport problem based on multi-marginal entropic transport.
  This approach offers a new way to formulate and solve numerically
  the calibration problem proposed by \cite{guo2021optimal}, using a
  multi-marginal extension of Sinkhorn algorithm as in \cite{BenamouE,CarlierGFE,BenamouMFG2}.
  When  the time step goes to zero we recover, as detailed in the companion paper \cite{Mpaper},  a continuous semi-martingale process, solution to a semi-martingale optimal transport problem, with a cost function involving the so-called ``specific entropy'',  introduced in
  \cite{Gantert91}, see also \cite{follmer22} and \cite{backhoff23}.

\end{abstract}

\section{Introduction}
Applications of Semi Martingale Optimal Transport (SMOT) in finance have been
the object of several recent studies (\cite{Tan_2013}, \cite{guo2021optimal}, \cite{Guyon_2022} \ldots). This framework is particularly well adapted  to the problem of model calibration: Find a diffusion model that is compatible with observed option prices.
SMOT is the stochastic version of
Dynamic Optimal Transport (DOT), that was  introduced by \cite{BBA}, as a generalization of static OT  where the mass is transported  by a time dependent flow
minimizing the kinetic energy.
While the theoretical aspects of these problems are now well understood, the numerical implementation remains challenging.

In the meantime, a stochastic relaxation of static optimal transport, known as {\it Entropic} Optimal Transport (EOT), has shown to be solvable efficiently, by the so-called {\it Sinkhorn} algorithm (see \cite{PeyreB} for a review). Interestingly,  while there is equivalence between the static OT problem and its dynamic version, the Entropic regularisation of OT can also be seen either as  a static problem, or as variant of the DOT adding a constant volatility diffusion to the governing model, this dynamic problem is known as the Schr\"odinger problem, see \cite{LeonardS}. 

Consider  processes described by the SDE with drift $\mu$ and volatility $\sigma$
\[
    \diff X_t = \mu_t \, \diff t + \sigma_t \, {\diff W}_t, \quad  X_0 \sim \nu_0,
\]
where $W_t$ is the Wiener process and $\nu_0$ the initial law of $X$ (typically ${X_0}_\# \PP = \delta_{x_0}$).  Formally, the above SDE induces a probability $\PP$ on the  space of continuous paths. 
\Nte{Reciprocally, see  section \ref{sec:continuous},  $\mu$ and $\sigma$ can be interpreted as characteristics coefficients depending on 
$\PP$. }In this setting, all problems (DOT, SMOT and EOT) are  seen as a variant of :  
\begin{align}\label{eq:SMOT}
  \inf_{ \displaystyle \PP}  {\cal F}(\PP) \coloneqq \expect_\PP*{\dint_0^T F(t,X_t,\mu_t(X_t),\sigma_t(X_T)^2) \diff t}  +  M_0(\PP_0) 
\end{align}
The cost function $F$ may describe modelling, calibration or observations  constraints or combinations of these,  and $M_0$ enforces the $\nu_0$ initial marginal constraint. 
\Nte{
\begin{itemize} 
\item   Classical DOT  \cite{BBA} corresponds to the particular case $F=|\mu|^2 $ if $\sigma \equiv 0$, ${ X_{t_0}}_\# \PP = \rho_0 $ and 
 $ {X_{T}}_\# \PP = \rho_T $ , $+\infty$ otherwise.  It corresponds to the minimization of the kinetic energy under 
 under initial and terminal conditions on the laws of $X_{t=0}$ and $X_{t=T}$ denoted $\rho_0$ and $\rho_T$.   
\item EOT   \cite{LeonardS}  is the same as DOT except for a prescribed constant volatility $\sigma \equiv \bs $.
It is also one of the formulations of the {\it Schr\"odinger's} problem , i.e. minimizing the relative entropy (aka Kullblack Leibler divergence) of $\PP$:  $\KL(\PP | \RR_\bs)$, with respect to the  Wiener measure $\RR_\bs$ (with a given constant volatility $\bs$)
Classical EOT in its static formulation can be solved efficiently by the Sinkhorn's algorithm  \cite{CuturiA} \cite{SalanieA} \cite{BCCPN}.
\item SMOT  will handle general forms of $F$, as long as it is convex with respect to $(\mu,\sigma^2)$. 
\item  Martingale Optimal Transport (see  \cite{backhoffM} and the references therein) implies a  $\mu \equiv 0$  constraint, a pure diffusion model.
For the  classical martingale constrained DOT problem, feasibility requires  that  $\rho_0$ and $\rho_T$ to be in convex order. In this paper we  use a soft version  of this constraint $F = C \|\mu|^2$ with $ C>>1$.  We therefore try to achieve a mass transport  governed preferably  by the  volatility and not the drift. 
\item Price calibration  adds  a set of  discrete constraints described by $(\tau_i, c_i, G_i)_{i \in \Ic}$ where for each $i$ the  triplet $(\tau_i, c_i, G_i)$ is the maturity, the price and the payoff function of an observed derivative price on the market. 
In this paper  penalize the cost with  $ C_k(X)  =   \lambda \sum_{i \in \Ic}   \| \expect { G_i(X_{\tau_i}) } - c_i  \|^2 $ ($\lambda >>1)$
(see next section).
\end{itemize} }

A general approach to DOT, EOT and their generalisations  (see \cite{benamou2021} for a review) is a time discretization that leads to a  Multi-Marginal OT problem.  In this setting the minimization is performed over the law $\PP^h$ ($h$ is the time step) of a vector-valued random variable whose marginals represent densities at each time step.
In this paper, we  use this time discretization method and an
entropic regulariization  to solve problem \eqref{eq:SMOT}:
We minimize  the sum  of a discretized form of $\F(\PP)$ in
\eqref{eq:SMOT}, $\F_h(\PP^h)$.  We use in particular the natural 
 discrete versions of $\mu$ and $\sigma$ (see section \ref{sec:prelim}) 
and the discrete time relative entropy regularization $\KL(\PP^h | \RR^h_\bs)$.

The drawback of minimising an energy in the form $\KL(\PP | \RR_\bs)$
is that by essence the minimizer is constrained at $\sigma=\bs$ and cannot satisfy constraints on $\mu$ (for instance $\mu\equiv 0$ or $\mu=r$, the interest rate) familiar in finance, since $\mu$ is precisely the only degree of freedom used to comply with  the distribution constraints.
We propose   to overcome this issue by
considering a proper scaling of the discrete relative entropy and its convergence property :
if $\PP^h$ a sequence of Markov chains converging to the law of a diffusion $\PP$ with  drift $\mu$ and  volatility
$\sigma$ then
\begin{align}\label{specrelent}
    \lim_{h\searrow 0 }  h\, \KL(\PP^h | \RR^h_\bs)   = \expect_\PP*{\dint_0^T \frac{\sigma_t^2}{\bs^2} - 1 - \log{\frac{\sigma_t^2}{\bs^2}} \, \diff t} =:  \Sc(\PP |\RR_\bs ),
\end{align}
The "specific relative entropy" $\Sc$ defined above has been introduced in
\cite{Gantert91} see also  \cite{follmer22} \cite{backhoff23}. \\

It is shown in \cite{Mpaper}  that minimizers of  time discrete problem
\begin{equation} 
\label{Ph} 
\inf_{\PP^h}  \F_h(\PP^h) + h\, KL(\PP^h | \RR^h_\bs)   + M_0(\PP^h_0)
\end{equation} 
approximate  in the limit $h\searrow 0$  a diffusion process $\PP$   minimizing  
 \begin{equation} 
\label{P} 
  \inf_{\PP}   {\F}(\PP) +\Sc(\PP | \RR_\bs).
\end{equation} 
For $h >0$ the minimizer $\PP^h$ is a solutions of a  Multi-Marginal EOT problem, and a discrete Markov chain that can be used for simulations.
The interest of this approach is not only theoretical. Classical methods to solve \eqref{eq:SMOT} involve maximizing the dual problem through gradient ascent or primal-dual approaches.
 These methods imply  solving a fully non-linear Hamilton-Jacobi-Bellman equation at each iteration (\cite{BBA}, \cite{PapaA}, \cite{LoeperA}). 
Our approach by Multi-Marginal Sinkhorn's algorithm, extending \cite{BenamouE} and \cite{BenamouMFG2},  computes an equivalent time discrete solution trough 
the maximisation of a strictly concave variational problem.

This paper describes the dual formulation of the problem in the context of local volatility calibration,   the associated Sinkhorn algorithm and its practical implementation, with numerical examples.
\Nte{A numerical study as $h \searrow 0$ is available in \cite{Mpaper}}.

\section{Martingale Optimal Transport for model calibration}
\label{sec:continuous}
The continuous formulation of (Semi-)Martingale Optimal Transport was introduced in \cite{Tan_2013}, and extended for multiple calibration applications as presented in the survey \cite{guo2021optimal}. Let $\Omega = C([0, T], \R), \, T > 0$ be the set of continuous paths, and $\Pc$ the set (or a convex subset of) probability measures on $\Omega$.

Following \cite{Tan_2013}, we 
 formulate the problem as a minimization problem on $\Pc$, we restrain our search to the set
${\cal P}^0_s\subset {\cal P}$ such that, for each $\PP\in{\cal P}^0$, $X\in \Omega$ is an $({\cal F},\PP)$-semimartingale on $[0,1]$ given by
\[
    X_t=X_0+A^\PP_t + M^\PP_t,\quad \langle X\rangle_t=\langle M^\PP_t\rangle=B^\PP, \quad \PP\text{-}\as, \quad t\in[0,1],
\]
where $M^\PP$ is
an $({\cal F},\PP)$-martingale on $[0,1]$ and $(A^\PP,B^\PP)$ is ${\cal F}$-adapted and $\PP$-$\as$ absolutely continuous with respect to time. In particular, $\PP$ is said to be have characteristics $(\mu,\sigma^2)(\PP)$, which are defined in the following way,
\[
    \quad \mu =\frac{\diff A^\PP_t}{\diff t}, \sigma^2_t=\frac{\diff B^\PP_t}{\diff t},
\]

Note that $(\mu,\sigma^2)$ is ${\cal F}$-adapted and determined up to $d\PP\times dt$, almost everywhere.
We now let $F(t,x,a,b)$ be convex with respect to $(a,b)$ for every $(t,x)$, and seek for
\begin{equation*}
    \inf_{\PP \in \Pc} \E_\PP \int_0^T F(t, X_t, \mu_t, \sigma_t^2) \, \diff t,
\end{equation*}
An  example of modelling function $F$ is  to enforce a Martingale constraint  on $X$ trough 
       \begin{equation*}
              F( \mu) = \begin{cases}
                  0       & \text{if } \mu = 0 \\
                  +\infty & \text{otherwise}.
              \end{cases}
          \end{equation*}
          or its soft version  ($ \lambda >>1$): 
                 \begin{equation*}
              F( \mu) =  \lambda   \| \mu \|^2
                        \end{equation*}

At this stage, the processes $\mu, \sigma$ are very general and can be generally path-dependent, however, as showed in \cite{guoCalibrationLocalStochasticVolatility2021a}, they can be chosen as local processes, i.e. functions of $(t,X_t)$ only:
indeed, for any choice of $\mu,\sigma$, there exists a local version $\mu(t,x),\sigma(t,x)$ that preserves the constraints (i.e. option prices) and that can only reduce the cost \eqref{eq:cont_mot}.
The minimization problem can therefore be formally reduced to
$X_t$ solutions of the stochastic differential equation:
\begin{equation}
    \label{eq:cmot_sde}
    \diff X_t = \mu_t(X_t) \, \diff t + \sigma_t(X_t) \, {\diff B}_t,
\end{equation}
where $B_t$ is a standard Brownian motion.  \\

The calibration problem adds  a set of  discrete constraints indexed by $i \in \Ic := \{1, \dots, N_c\}$ described by $(\tau_i, c_i, G_i)_{i \in \Ic}$ where for each $i$ the  triplet $(\tau_i, c_i, G_i)$ is the maturity, the price and the payoff function of an observed derivative price on the market.
\\
We will seek for an element $\PP \in \Pc$ such that
\begin{equation}
    \label{price}
    \expect_{\PP}{G_i(X_{\tau_i})} = c_i.
\end{equation} 
As an example, calibrating a set of call options at a fixed  maturity $t$ would lead to $G_i(x) = (x - K_i)^+$, where $K_i$ is the strike  of the $i$-th option, and $\tau_i = t$ for all $i$.
Moreover, we assume that there is a  set of listed maturities $\{ \tau_i \}_{i \in \Ic} $.  Because we will use $\{ t_k\} _{k=0}^{N_T} $ a time discretisation of $[0,T]$ we assume for simplifications  that the maturities take values in this discrete set.  Thus the set of constraints  $\Ic$ can be partitioned as $\Ic = \bigcup_k \Ic_k, \, \Ic_k \coloneqq \{i \in \Ic, \tau_i = t_k\}$, where $t_k$ is the $k$-th distinct maturity. \Nte{ By convention we set $\Ic_k  = \emptyset $ if there are no constraints at time $t_k$. } 
To take these constraints in account in the the minimisation problem, we will use  for all $k$, the costs: 
 \begin{equation*}
             C_k( \{ g_i \}) = \begin{cases}
                  0       & \text{if }  g_i = c_i, \forall i \in \Ic_k\\
                  +\infty & \text{otherwise}.
              \end{cases}
          \end{equation*}
          or  its mollified version ($\lambda  >>1$): 
           \begin{equation}
           \label{CK}  
              C_k(  \{ g_i \})  =   \lambda \sum_{i \in \Ic_k}   \| g_i - c_i  \|^2 .
                        \end{equation}           

Using these notations, the calibration problem becomes:
\begin{equation}
    \label{eq:cont_mot}
    \inf_{\PP \in \Pc} \E_\PP  \left[ \int_0^T F(t, X_t, \mu_t, \sigma_t^2) \, \diff t   \right]   + \sum_{k=0}^{N_K-1}  C_k( \{ \expect { G_i(X_{\tau_i}) } \}_{i \in \Ic_k}  )  + M_0({X_0}_\# \PP)
\end{equation}
Where $M_0 \coloneqq \delta_{\nu_0} $  enforces  the initial distribution of prices  $\nu_0$. \\ 


The cost function  can also be used to constrain  the model volatility and/or its regularity. 
We might want for instance $\sigma$  to be close to a prescribed guess $\bs$, which can be enforced  by adding  to $F$ \cite{loeper2016option} 
    \[
    G= (\sigma-\bar\sigma)^2
    \](which is convex in $\sigma^2$ and is linked to the Bass martingale problem \cite{backhoffM}), or 
          in  \cite{guo2021optimal} 
          \begin{equation}
              \label{GR}
              G(\sigma^2) = a \left(\frac{\sigma^2}{\bs^2}\right)^p + b \left(\frac{\sigma^2}{\bs^2}\right)^{-q} + c
          \end{equation}
          with $p, q, a, b > 0$ and $c \in \R$ such that $F$ is convex with minimum  at $\sigma = \bs$. It is also a barrier as $\sigma^2$ goes to $0$.
          
          In this paper, as explained in section \ref{sec:specentr},   we use 
          \begin{equation*}
            G( \sigma^2) =  \frac{\sigma^2}{\bs^2} - 1 - \log{\frac{\sigma^2}{\bs^2}} 
          \end{equation*}
          corresponding to the integrand of the Specific entropy  \eqref{specrelent}.
         It will allow to discretize (in time) the problem as
          a multi-marginal EOT problem.
          It is again convex with minimum  at $\sigma^2 = \bs^2 $ and a
          barrrier as $\sigma^2$ goes to $0$ but unlike (\ref{GR}) it is only sublinear
          as $\sigma \nearrow + \infty$ (see  \cite{Mpaper} where we need additional  assumptions to prove a convergence result based on the 
          the discretisation of $F_r$ as (\ref{SRE}) below).

\section{Time Discretisation as  a Multi-Marginal Entropic Martingale Transport}

\subsection{Notations}

We will discretize our problem in time, replacing the interval $[0, T]$ with a regular grid of $N_T + 1$ timesteps,  $t_k = k\,h$ for $k \in \{0, \dots, N_T\} =: \Kc^h$, where $h := T/N_T$ is the time step.
We impose that all the calibration times $\tau_i$ are included in the grid, i.e. $\tau_i = t_{k_i}$ for some $k_i \in \Kc^h$.

Instead of functions $t \mapsto \omega(t)$, we consider their discrete counterparts, which are n-tuples $(\omega_0, \dots, \omega_{N_T}) \in \R^{N_T+1}$, in which $\omega_k$ corresponds to the value of the path at time $t_k$. Instead of $\R^{N_T}$, we denote by $\Xc_k$ the space of values that $\omega_k$ can take, and by $\Omega^h := \Pi_{i=0}^{N_T} \Xc_i$ the space of discrete paths\footnote{ \Nte{
An element $(t \mapsto \omega(t)) \in \Omega$ is hence replaced by a n-tuple $(\omega_0, \dots, \omega_{N_T}) \in \Omega^h$ with $\omega_k \in \Xc_k$ for $k \in \Kc^h$.}}.
We denote  $\Pc^h$ the set of probability measures on $\Omega^h$.

We are hence searching for a probability measure $\PP^h$ on $\Omega^h$.
We denote by $(X_k)_{k \in \Kc^h}$ the canonical process of $\PP^h$ on $\Omega^h$. We will denote by $\PP^h_k := {X_k}\#\PP^h$ the marginal law of $\PP^h$ at timestep $k \in \Kc^h$, and by $\PP^h_{k,l} := (X_k, X_l)\#\PP^h$ the joint law between time steps $k \in \Kc^h$ and $l \in \Kc^h$.

We note $\Kc^h_{-i} = \Kc^h\setminus \{i\}$ the set of timesteps except timestep $i$, and $\diff x_{-i} = \prod_{\Kc^h_{-i}} \diff x_k$, which allows to write the marginal law $\PP^h_{k}$ as $\PP^h_{k} = \dint \PP^h(x_k, \diff x_{-k})$ and joint laws in a similar fashion.
Similarly, we note $\diff x_{[i, j]} = \Pi_{k=i}^j \diff x_k$.

We note $\nu_0$ the initial marginal of our process, which is imposed, $X_0 \sim \nu_0$. It may or may not be a Dirac mass in our case.

We denote by $\bPP^h$ the reference measure on $\Omega^h$ that we will use to regularize the problem. We will denote by $(Y_k)_{k \in \Kc^h}$ the canonical process of $\bPP^h$ on $\Omega^h$. It's law is determined by a Euler-Maruyama discretisation of the continuous reference process :
\[
    Y_{k+1} = Y_k + \overline{\mu}(Y_k, kh) \, h + \overline{\sigma}(Y_k, kh) \, h^{1/2} \, Z_k, \quad \forall k \in \Kc^h_{-0}, \, Y_0 \sim \rho_0, \quad Z_k \sim {\mathcal N}(0,1) 
\]

For any probability measure $\PP^h$ and $\bPP^h$  in $\Pc^h$, we note $\KL(\PP^h|\bPP) = \expect_{\PP^h}*{\log\left(\frac{\diff \PP^h}{\diff \bPP^h}\right) - 1}$ the
Kullback-Leibler divergence between $\PP^h$ and $\bPP^h$ if $\PP^h \ll \bPP^h$. By convention, if $\PP^h \not\ll \bPP^h$, we set $\KL(\PP^h|\bPP^h) = +\infty$.


\subsection{Time discretisation of conditional moments}
\label{sec:prelim}
As opposed to the continuous-time approach, which uses Markovian projections of the processes, and as such the variable being optimised are functions representing the drift and volatility, in this discrete-time approach, we will directly optimise on $\PP^h$. 
In order to  justify the choice of moment variables in the discrete problem, let us consider the Euler-Maruyama discretization of a diffusion process. Let $X_t$ be a diffusion process  with drift $\mu$ and volatility $\sigma$, following the SDE;
\[
    \diff X_t = \mu(X_t, t) \, \diff t + \sigma(X_t, t) \, {\diff \RR}_t;
\]
and consider the Euler-Maruyama time  discretization of the process:
\[
    X^h_{k+1} = X^h_{k} + \mu(X^h_k, kh) \, h + \sigma(X^h_k, kh) \, h^{1/2} \, Z_k
\]
where $\forall k \in \{0, \dots, N_T\} := \Kc^h, \, Z_k$ is a standard normal random variable, and  we note $\PP^h$ the law of $(X^h_k)_{k \in \Kc^h}$. We have $\PP^h \in \Pc^h_{\text{EM}}$.

For such a process, we can compute the following quantities from conditional expectations :
\begin{align}
    \label{beta_def} \beta_k(x)   & = \frac{1}{h} \expect*{X_{k+1} - X_k | X_k = x} = \mu(x, kh),                                                                      \\
    \label{alpha_def} \alpha_k(x) & = \frac{1}{h} \expect*{(X_{k+1} - X_k)^2 | X_k = x} = \mu^2(x, kh) \, h + \sigma^2(x, kh) \xrightarrow[h \to 0]{} \sigma^2(x, kh).
\end{align}
These variables are computed from the law $\PP^h$ and are the discrete counterpart of the drift and volatility of the continuous process. They can hence be used to  discretize of $F(t, X_t, \mu, \sigma^2)$. \\

The method actually extends to any  vector of variables $ (b_k) : \Xc \to \R^K$ defined by taking the conditional expectation of a  function $B : (\Xc, \Xc) \to \R^K$ to be specified and depending on two consecutive timesteps :
\begin{equation} 
\label{general} 
    b_k(x) = \frac{1}{h} \expect{B(X_{k}, X_{k+1}) | X_k=x}.
\end{equation} 
In the present  section  the  variables $\beta_k$ and $\alpha_k$ correspond to  the function $B(X, Y) = \begin{bmatrix}
        (Y-X) \\ 1/2 \, (Y-X)^2
    \end{bmatrix}$, 
but  in section \ref{alsv}, we will use $B(X, Y) = (1-e^{Y-X})$.

\subsection{Specific relative entropy}
\label{sec:specentr}
We give a formal derivation of the Specific Entropy (see
\cite{Gantert91}   \cite{follmer22} \cite{backhoff23} for rigorous smoothnesss hypothesis and proofs).\\

The Kullback-Leibler divergence between two normal laws $\mathcal{N}(\mu_1, \sigma_1^2)$ and $\mathcal{N}(\mu_2, \sigma_2^2)$ is equal to :
\begin{equation*}
    \KL(\mathcal{N}(\mu_1, \sigma_1^2) | \mathcal{N}(\mu_2, \sigma_2^2)) = \frac{1}{2}\left(\frac{\sigma_1^2 + (\mu_1 - \mu_2)^2}{\sigma_2^2} - 1 - \log\left(\frac{\sigma_1^2}{\sigma_2^2}\right)\right).
\end{equation*}

Consider two diffusion measures $\PP$ and $\bPP$ defined by the following SDEs on their respective canonical processes $X$ and $Y$ :
\begin{align}
    \diff X_t & = \mu(X_t, t) \, \diff t + \sigma(X_t, t)  \, {\diff \RR}_t, \,  X_0 \sim \PP_0,                         \\[8pt]
    \diff Y_t & = \overline{\mu} (Y_t, t) \,  \diff t + \overline{\sigma}(Y_t, t) \, {\diff \RR}_t,  \, Y_0 \sim \PP_0 .
\end{align}
We can discretize on a grid of step $h$ as law $\PP^h$ and $\bPP^h$ using the Euler-Maruyama discretization from previous section, giving their respective canonical processes $X^h$ and $Y^h$ :
\begin{align}
    X^h_{k+1} & = X^h_{k} + \mu(X^h_k, kh) \,  h + \sigma(X^h_k, kh) \,  h^{1/2}  \, Z_k, \quad \forall k \in \Kc^h, \,
    X^h_0 \sim \rho_0     ,                                                                                                                                \\[8pt]
    Y^h_{k+1} & = Y^h_{k} + \overline{\mu}(Y^h_k, kh) \, h + \overline{\sigma}(Y^h_k, kh) \, h^{1/2} \, Z_k, \quad \forall k \in \Kc^h, Y^h_0 \sim \PP_0.
\end{align}
Noting $\mu_k(x) = \mu(x, kh)$, $\sigma_k(x) = \sigma(x, kh)$, and $\overline{\mu}_k(x) = \overline{\mu}(x, kh)$ and $\overline{\sigma}_k(x) = \overline{\sigma}(x, kh)$, 
 the Kullback-Leibler divergence can then be decomposed as follows :
\begin{equation}
    \label{SRE}
    \begin{array}{ll}

        h \, \KL(\PP^h|\bPP^h) & = h\, \sum_{k=0}^{N_T - 1} \dint \KL(\mathcal{N}(\mu_k(x) h, \sigma_k(x)^2 h) | \mathcal{N}(\overline{\mu}_k(x) h, \overline{\sigma}_k(x)^2 h) ) \,  \PP^h_k(dx)                                                                                                                          \\[12pt]
                               & = \frac{h}{2}  \, \sum_{k=0}^{N_T - 1} \dint  \left(\dfrac{\sigma_k(x)^2 h + ((\mu_k(x)-\overline{\mu}_k(x))h)^2}{\overline{\sigma}_k(x)^2h} - 1 - \log\left(\dfrac{\sigma_k(x)^2 h}{\overline{\sigma}_k(x)^2h}\right)\right) \, \PP^h_k(dx) \\[12pt]
                               & = \frac{h}{2} \,  \sum_{k=0}^{N_T - 1} \dint\left(\dfrac{\sigma_k(x)^2 + (\mu_k(x) - \overline{\mu}_k(x))^2h}{\overline{\sigma}_k(x)^2} - 1 - \log\left(\dfrac{\sigma_k(x)^2}{\overline{\sigma}_k(x)^2}\right)\right) \, \PP^h_k(dx)       \\[12pt]
                               & \xrightarrow[h \to 0]{} \mathcal{S}(\PP | \bPP ) := \frac{1}{2} \dint \expect_{\PP}*{\dfrac{\sigma_t^2}{\overline{\sigma}^2} - 1 - \log\left(\dfrac{\sigma_t^2}{\overline{\sigma}^2}\right)} \diff t   .
    \end{array}
\end{equation}

This convergence result motivates the use the Specific entropy as a regulariser of the continuous problem, because it is linked to a natural discretization in terms of the Kullback-Leibler divergence, and therefore 
and entropy regularization of a discretisation if (\ref{eq:SMOT}). 

\subsection{Discretization and specific entropy regularisation}
\label{sec:discretisation}

The cost function, simplified for the presentation is  
\begin{equation*}
 \Fc (\PP) =    \expect_{\PP}*{  \dint_0^T F(\mu_t,\sigma_t) \diff t  } +  \sum_{k =0}^{N_T-1}      C_k( \{ \expect { G_i(X_{\tau_i}) } \}_{i \in \Ic_k}  )  + M_0(\PP_0) .
\end{equation*}

The  specific entropic regularisation  of the continuous problem (\ref{eq:cont_mot}) is 
\begin{equation*}
  \inf_{\PP \in \Pc}   \Fc (\PP)   + \mathcal{S}(\PP | \bPP )  
\end{equation*}
where $\mathcal{S}$ the specific entropy introduced in section \ref{sec:specentr}.  Its 
   time discretisation (as in (\ref{P}) and (\ref{Ph}))  
using (\ref{SRE}) and the variables $\beta_k$ and $\alpha_k$ defined in equations (\ref{beta_def}) and (\ref{alpha_def})
  can  be  written as: 
\begin{equation}
    \label{mmmtprim}
    \inf_{\substack{\PP^h \in \Pc^h}}  \Fc^h(\PP^h)   + h \KL(\PP^h|\bPP^h)
\end{equation}
with 
\[
      \Fc^h(\PP^h) =  h  \,  \expect_{\PP^h}*{  \sum_{k=0}^{N_T-1} F( \beta_k (X_k),\alpha_k(X_k)) }   + \sum_{k =0}^{N_K-1}   C_k( \{ \expect { G_i(X_{\tau_i}) } \}_{i \in \Ic_k})   + M_0({X_0}_\#\PP^h) . 
\] 
or 
\begin{equation}
\label{FH} 
      \Fc^h(\PP^h) =  h  \,  \expect_{\PP^h}*{     \sum_{k=0}^{N_T-1} F( b_k (X_k))  }  +   \sum_{k =0}^{N_K-1}   C_k( \{ \expect { G_i(X_{\tau_i}) } \}_{i \in \Ic_k})    + M_0({X_0}_\#\PP^h) ,
\end{equation}
using, for simplification, the general form  of conditional moments  $ b_k  = \frac{1}{h}\expect_{\PP^h}*{B(X_k^h, X_{k+1}^h)|X_k^h}$ (section \ref{sec:prelim}). \\

As customary in Entropic OT, we  perform a change of variable in the definition (\ref{FH})  in order to apply Fenchel-Rockafellar convex duality (section \ref{ds}) and 
later use Sinkhorn algorithm (section \ref{ss}).  
Let us define, for all $k$: 
\begin{definition}
    \label{rem:characteristics}
    \begin{align*}
        \nu_k  (dx)  & = X_k\#\PP^h                (dx)                               \\
     m_k(dx)  &  =   \nu_k(dx)  b_k(dx) = \frac{1}{h} \expect_{\PP^h_{k,k+1}} {B(dx, X_{k+1}^h)}       \\
                 g_k            & = \{  \expect_{\PP^h_{\tau_i}}*{G_i({X_{\tau_i} }_\# \PP^h)} \}_{  i \in \Ic_k} 
    \end{align*}
\end{definition} 
Notice that  the operator 
\begin{definition}
\label{DeltaD} 
 \begin{equation*}
        \begin{array}{lrcl}
            \Delta^\dagger : & \Pc(\Omega^h) & \to & (\otimes_{k=1}^{N_T-1} \Pc(\Xc_k))  \times  (\otimes_{k=1}^{N_T-1} \Mc(\Xc_k)) \times
           (\otimes_{k=1}^{N_T-1}   \R^{\# \Ic_k}     )                                                                                                         \\[8pt]
                             & \PP^h & \rightarrow & \Delta^\dagger\PP^h : = \left( \{ h\, \nu_k \} , \{ h\,  m_k\} ,\{  g_k\}   \right)
        \end{array}
    \end{equation*}
    is linear. \Nte{ Remember that  $ \#Ic_k$  the cardinal of the set of maturities at time $t_k$ and 
$ \Ic_k = \emptyset$ if there are none} 
    ($ \Mc(\Xc_k))$ is the set of  vector Radon measure on $ \Xc_k$).   The $h$ multiplicative factor is used to simplify the dual formulation. 
    
\end{definition}    

Overloading the notation  $\Fc^h$ as a function of the new variables $\left( \{ \nu_k \} , \{ m_k\} ,\{  g_k\}   \right) $ we obtain 
\begin{equation}
\label{FH2} 
      \Fc^h( \Delta^\dagger \PP^h) =    \sum_{k=0}^{N_T-1}   \left(  h \,  \expect_{\nu_k}*{  F( \frac{  h\, m_k (X_k)}{ h\,  \nu_k(X_k)})  }    +          C_k(g_k)  \right)  + M_0(\nu_0) .
\end{equation}
and $\Fc^h$ is a convex function of $\left( \{ \nu_k \} , \{ m_k\} ,\{  g_k\}   \right)$  by construction (we   recognise in the integrand the perpective functions of $F$ which is assumed to be convex). \\

In summary, (\ref{mmmtprim}) can be casted  in the Fenchel Rockaffelar convex setting (appendix \ref{FRC})  with a primal problem:
\begin{equation}
   \inf_{\substack{\PP^h \in \Pc^h}}  \Fc^h( \Delta^\dagger \PP^h)   + \Gc(\PP^h) 
     \label{frprim}, 
\end{equation} 
where the relative entropy 
   \begin{equation}
    \label{G} 
        \Gc(\PP^h) :=  h \KL(\PP^h|\bPP^h).
    \end{equation}
is also convex.     

\begin{remark}[Convergence as $h \rightarrow 0$] 
We point out  that while the solution of this problem is a measure that respects the initial condition and the price constraints similarly to the continuous problem,
the constraint that $\PP^h$  is the law of a  discrete diffusion Markov chain and  belongs to the set of semi-martingale laws ${\cal P}^0_s$   in the limit  has been relaxed.  We show  in \cite{Mpaper} under additional conditions  that this relaxation is tight under additional regularity assumptions and 
 that a minimizer of the continuous problem (\ref{eq:cont_mot})  can be constructed using a sequence in $h$ of minimizers of (\ref{mmmtprim}).
 \end{remark} 
 

\subsection{Dual Problem}
\label{ds}

We first need to compute the Legendre transforms of $\Fc$ and $\Gc$, and the adjoint operator of $\Delta^\dagger$.

\begin{lemma}
    $\Gc^\star$, the Legendre-Fenchel transform of $\Gc$ is given by :
    \[
        \begin{array}{lrcl}
            \Gc^\star : &  \C_b(\Omega^h)  & \to & \overline{\R}                                                \\[8pt]
                        & f & \rightarrow & h \, \E_{\bPP^h}(\exp(f/h))
        \end{array}
    \]
\end{lemma}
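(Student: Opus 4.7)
The plan is to reduce the computation of $\Gc^\star$ to a one-dimensional pointwise maximization in the Radon--Nikodym density $\rho = \diff \PP^h/\diff \bPP^h$. With the paper's convention $\KL(\PP^h|\bPP^h) = \E_{\PP^h}[\log \rho - 1]$ one has $\Gc(\PP^h) = h \dint (\rho \log \rho - \rho)\,\diff \bPP^h$ whenever $\PP^h \ll \bPP^h$, extended to all finite non-negative measures by the same formula (and by $+\infty$ otherwise). In this form $\Gc$ is a separable convex integral functional of $\rho$, which is exactly the setup for Rockafellar-type dualization.

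For $f \in \C_b(\Omega^h)$, I would write
\[
\Gc^\star(f) = \sup_{\PP^h} \Bigl\{ \dint f\,\diff \PP^h - \Gc(\PP^h) \Bigr\} = \sup_{\rho \geq 0} \dint \bigl[ f\rho - h(\rho \log \rho - \rho) \bigr]\,\diff \bPP^h,
\]
and then perform the pointwise-in-$x$ maximization. For fixed $x$, the integrand is strictly concave in $\rho \geq 0$ with derivative $f(x) - h \log \rho$, so it is maximized at $\rho^\star(x) = \exp(f(x)/h)$, with pointwise optimal value $h\exp(f(x)/h)$. Since $f$ is bounded, $\rho^\star$ is bounded and measurable, so $\diff \PP^{h,\star} := \rho^\star\,\diff \bPP^h$ is a finite non-negative measure attaining the supremum, and integration yields $\Gc^\star(f) = h\,\E_{\bPP^h}[\exp(f/h)]$.

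The one technical subtlety is the exchange of supremum and integral. It is immediate here: the pointwise bound $f(x)\rho - h(\rho\log\rho - \rho) \leq h\, e^{f(x)/h}$, valid for all $\rho \geq 0$, integrates to the global upper bound $\dint f\,\diff \PP^h - \Gc(\PP^h) \leq h\,\E_{\bPP^h}[e^{f/h}]$, and equality at $\rho = \rho^\star$ shows the bound is tight. Uniqueness and continuity of the pointwise maximizer side-step any measurable-selection concerns, and the identification of $\Gc^\star$ is complete. One should note that the fact we obtain $h\,\E_{\bPP^h}[e^{f/h}]$ rather than the Donsker--Varadhan expression $h\log \E_{\bPP^h}[e^{f/h}]$ is precisely a consequence of allowing the optimization domain to include non-probability non-negative measures, which is the natural Fenchel--Rockafellar framework for the primal problem \eqref{frprim}.
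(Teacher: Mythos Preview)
Your argument is correct and complete. The paper does not actually supply a proof for this lemma: it is stated as a standard fact and the text proceeds directly to the next lemma. Your pointwise computation, including the observation that the $-1$ in the paper's convention for $\KL$ is what yields $h\,\E_{\bPP^h}[e^{f/h}]$ rather than the Donsker--Varadhan form $h\log\E_{\bPP^h}[e^{f/h}]$, is exactly the justification one would expect and fills in what the paper leaves implicit.
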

\begin{lemma}
\label{LLL} 
 $\Fc^{h,\star}$ 
    the Legendre transform of $\Fc^h$ (\ref{FH2})   is given by :
        \[
        \begin{array}{ll}
       
            \Fc^{h,\star} : &   (\otimes_{k=1}^{N_T-1} \C_b(\Xc_k))  \times  (\otimes_{k=1}^{N_T-1} \C_b(\Xc_k)) \times
             (\otimes_{k=1}^{N_T-1}   \R^{\# \Ic_k}     ) 
                    \to  \overline{\R}                                                \\[8pt]
                        &   \Phi  \coloneqq   ( \{ \phi_{\nu_k} \} , \{ \phi_{m_k} \}  , \{\Lambda_k \} )   \rightarrow  \sum_{k=0}^{N_T-1}   M_k^\star(\phi_{\nu_k}  +  F^\star (\phi_{m_k}) )  + \C_k^\star(\Lambda_k) 
        \end{array}
    \]
 where $M^\star_0$ is the Legendre-Fenchel transform of $M^0$ the function enforcing the initial law and $M_k^\star$ for $k= 1,N_T-1$ is the indicatrix $\iota_{\phi \le 0}$ or
equivalently the  Legendre-Fenchel transform of $ M_k (\nu) \coloneqq \iota_{\nu \ge 0}$.
\end{lemma}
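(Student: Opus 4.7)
My proof plan is to exploit the separable structure of $\Fc^h$ displayed in (\ref{FH2}) and then apply the classical formula for the Legendre transform of a perspective functional.

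First, I would pass to the rescaled primal variables $(\tilde\nu_k,\tilde m_k,g_k)=(h\nu_k,h m_k,g_k)$ of Definition~\ref{DeltaD}, in which $\Fc^h$ decomposes additively as
\[
\Fc^h(\{\tilde\nu_k\},\{\tilde m_k\},\{g_k\}) = \sum_{k=0}^{N_T-1}\Bigl(H_k(\tilde\nu_k,\tilde m_k)+C_k(g_k)\Bigr)+M_0(\tilde\nu_0/h),
\]
where $H_k(\tilde\nu,\tilde m)=\int F(\tilde m/\tilde\nu)\,d\tilde\nu$ on the cone $\{\tilde\nu\ge 0,\ \tilde m\ll\tilde\nu\}$ (and $+\infty$ elsewhere). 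Because the blocks of primal variables involved in different summands are disjoint, $\Fc^{h,\star}$ splits as the sum of the Legendre--Fenchel transforms of the individual summands, reducing the task to computing $H_k^\star$, $C_k^\star$ and $M_0^\star$ separately.

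The core step is the computation of $H_k^\star$. Working with density representations, I would first take the inner supremum in $\tilde m$ pointwise in $x$,
\[
\sup_{\tilde m(x)}\bigl[\phi_{m_k}(x)\,\tilde m(x)-\tilde\nu(x)\,F\bigl(\tilde m(x)/\tilde\nu(x)\bigr)\bigr]=\tilde\nu(x)\,F^\star(\phi_{m_k}(x)),
\]
and then resolve the outer supremum over $\tilde\nu\ge 0$,
\[
\sup_{\tilde\nu\ge 0}\int\bigl(\phi_{\nu_k}+F^\star(\phi_{m_k})\bigr)\tilde\nu\,dx=\begin{cases}0 & \text{if }\phi_{\nu_k}+F^\star(\phi_{m_k})\le 0\text{ pointwise,}\\ +\infty & \text{otherwise,}\end{cases}
\]
which is exactly $M_k^\star(\phi_{\nu_k}+F^\star(\phi_{m_k}))$ with $M_k^\star=\iota_{\phi\le 0}$ for $k\ge 1$. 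For $k=0$ the extra term $M_0(\tilde\nu_0/h)$ contributes $M_0^\star(\phi_{\nu_0})$, and each $C_k$ contributes $C_k^\star(\Lambda_k)$ directly from the definition.

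The main delicate point, as is typical for such integral-perspective transforms, will be justifying the interchange of supremum and integral and checking that restricting to $\tilde m\ll\tilde\nu$ is lossless (the singular part of $\tilde m$ must not improve the sup). I would handle these by the standard measurable-selection argument for convex integral functionals à la Rockafellar, which applies thanks to the convexity of $F$ in $(\mu,\sigma^2)$ assumed throughout, and to the fact that the recession function of $F$ is non-negative under the normalisations used in the paper so that the singular part contributes nothing. Assembling the pieces then yields the announced formula.
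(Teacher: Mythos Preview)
Your approach is essentially the same as the paper's: exploit separability in $k$ and then invoke the Legendre transform of the perspective functional. The paper is very terse here --- it simply rewrites $\Fc^h$ with auxiliary terms $M_k(\nu_k)$ for \emph{all} $k$ (with $M_k=\iota_{\nu\ge 0}$ for $k\ge 1$) and quotes the classical identity $\{(m,\nu)\mapsto\nu F(m/\nu)\}^\star(\phi,\psi)=\iota_{\phi+F^\star(\psi)\le 0}$, which is precisely what your two-step supremum derives.

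There is, however, one genuine slip in your treatment of $k=0$. You assert that ``the blocks of primal variables involved in different summands are disjoint'' and therefore compute $H_0^\star$ and $M_0^\star$ separately, concluding that $M_0$ ``contributes $M_0^\star(\phi_{\nu_0})$''. But $H_0(\tilde\nu_0,\tilde m_0)$ and $M_0(\tilde\nu_0/h)$ share the variable $\tilde\nu_0$, so the conjugate of their sum is \emph{not} $H_0^\star+M_0^\star$. If you run your own two-step supremum on the combined term $H_0+M_0$ --- first in $\tilde m_0$, yielding $\langle F^\star(\phi_{m_0}),\tilde\nu_0\rangle$, then in $\tilde\nu_0$ --- you obtain
\[
\sup_{\tilde\nu_0}\ \bigl\langle\phi_{\nu_0}+F^\star(\phi_{m_0}),\tilde\nu_0\bigr\rangle-M_0(\tilde\nu_0/h)\ =\ M_0^\star\bigl(\phi_{\nu_0}+F^\star(\phi_{m_0})\bigr),
\]
which is the $k=0$ term claimed in the lemma. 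This is exactly why the paper folds the positivity constraint into $M_k$ for $k\ge 1$ and treats all $k$ uniformly: the perspective part and $M_k$ always couple through $\nu_k$, and the joint conjugate is $M_k^\star(\phi_{\nu_k}+F^\star(\phi_{m_k}))$ in every case. With this correction your argument goes through.
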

\begin{proof}
The proof is a direct application of the definition of the Legendre Fenchel transform. Note that 
 $\Fc^h$ is separable in $k$  and position $x_k$. 
We first rewrite the cost function in a more general form   
  \[
      \Fc^h( \Delta^\dagger \PP^h) =    \sum_{k=0}^{N_T-1}   \left(  h \,  \expect_{\nu_k}*{  F( \frac{ h\, m_k (X_k)}{ \ h\, \nu_k(X_k)})}  + M_k(\nu_k)      +          C_k(g_k)  \right)  
\]
(\ref{FH2}) corresponds to  $M_k(\nu_k) =  \iota_{\nu_k \ge 0}$ for $k= 1,N_T-1$ and does not change the cost as we identify $(m, \nu) \rightarrow   \nu F( m / \nu) $ with its bi-dual 
$ \{ (m, \nu) \rightarrow \nu F ({m}/ {\nu}) \}^{\star \star}     =  \nu F( \frac{  m }{ \nu}) $ if $ \nu >0$, $0$ if $ (m,\nu) = (0,0)$ and $+\infty $ else.  We also use the classic characterisation 
of the LF dual of a perspective function 
$ \{ (m, \nu) \rightarrow \nu F ({m}/ {\nu}) \}^{\star} ( \phi,\psi) = \iota( \phi + F^\star( \psi)) $ .

\end{proof}

\begin{lemma}
\label{dpa} 
    The adjoint operator to $\Delta^\dagger$ is given as 
        \[
        \begin{array}{ll}
       
            \Delta  : &   (\otimes_{k=1}^{N_T-1} \C_b(\Xc_k))  \times  (\otimes_{k=1}^{N_T-1} \C_b(\Xc_k)) \times
             (\otimes_{k=1}^{N_T-1}   \R^{\# \Ic_k}     ) 
                    \to                   \C_b(\Omega^h)                             \\[8pt]
                        & \Phi  \coloneqq  \{ \phi_{\nu_k} \} , \{ \phi_{m_k} \} , \{\Lambda_k \}   \rightarrow   
                         \sum_{k=0}^{N_T-1}  \left( 
                         h \phi_{\nu_k} +    \phi_{m_k}   \cdot B(x_{k}, x_{k+1}) +   \Lambda_k    \cdot   \{ G_i(x_{\tau_{i}}) \}_{i \in \Ic_k}    \right) 
                                \end{array}
    \]

\end{lemma}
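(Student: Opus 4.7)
The plan is to argue by the defining identity of the adjoint, namely
\[
\langle \Delta^\dagger \PP^h,\Phi\rangle \;=\; \langle \PP^h, \Delta\Phi\rangle \qquad \forall\,\PP^h\in\Pc(\Omega^h),\ \forall\,\Phi,
\]
where on the left the pairing is the natural one on the product space $(\otimes_k\Pc(\Xc_k))\times(\otimes_k\Mc(\Xc_k))\times(\otimes_k\R^{\#\Ic_k})$, and on the right the pairing is between measures and continuous bounded functions on $\Omega^h$. Once this identity is established, the formula for $\Delta$ can simply be read off from the integrand.

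First I would expand the left-hand side component by component using Definition \ref{rem:characteristics}. For the marginal component, $\int \phi_{\nu_k}\,d(h\nu_k) = h\,\expect_{\PP^h}{\phi_{\nu_k}(X_k)}$ since $\nu_k = X_k\#\PP^h$. For the ``moment'' component, using $h\,m_k(dx) = \expect_{\PP^h_{k,k+1}}{B(dx,X_{k+1})}$, a disintegration of $\PP^h_{k,k+1}$ gives
\[
\int \phi_{m_k}(x)\cdot d(h\,m_k)(x) \;=\; \int \phi_{m_k}(x_k)\cdot B(x_k,x_{k+1})\,d\PP^h_{k,k+1}(x_k,x_{k+1}) \;=\; \expect_{\PP^h}{\phi_{m_k}(X_k)\cdot B(X_k,X_{k+1})}.
\]
For the constraint component, by definition of $g_k$ we directly have $\Lambda_k\cdot g_k = \expect_{\PP^h}{\Lambda_k\cdot\{G_i(X_{\tau_i})\}_{i\in\Ic_k}}$ (with the convention that this term is zero when $\Ic_k=\emptyset$).

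Next I would sum over $k\in\{0,\dots,N_T-1\}$, pull the expectation $\expect_{\PP^h}$ outside (by linearity of the integral with respect to $\PP^h$), and obtain
\[
\langle \Delta^\dagger \PP^h,\Phi\rangle \;=\; \expect_{\PP^h}*{\sum_{k=0}^{N_T-1}\Bigl(h\,\phi_{\nu_k}(X_k)+\phi_{m_k}(X_k)\cdot B(X_k,X_{k+1})+\Lambda_k\cdot\{G_i(X_{\tau_i})\}_{i\in\Ic_k}\Bigr)}.
\]
This can now be read as $\expect_{\PP^h}{(\Delta\Phi)(X_0,\dots,X_{N_T})}$ with $\Delta\Phi$ exactly the continuous bounded function stated in Lemma \ref{dpa}, establishing the adjoint identity.

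The one thing to verify carefully is that $\Delta\Phi$ indeed lies in $\C_b(\Omega^h)$: here the $\phi_{\nu_k},\phi_{m_k}$ are bounded continuous by assumption, $B$ and each $G_i$ are assumed measurable (and in the setting of the paper continuous and of at most moderate growth on the path space considered), and the sum over $k$ is finite. Since the whole argument is a direct application of Fubini/disintegration together with linearity of pairings, there is no real analytic obstacle; the only care required is the book-keeping of the vector-valued moment variable $m_k$ and the index set $\Ic_k$, and the correct matching of the $h$ factors embedded in the definition of $\Delta^\dagger$.
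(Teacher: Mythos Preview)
The paper states this lemma without proof, treating it as a routine identification from the definitions. Your argument---pairing $\Delta^\dagger\PP^h$ with $\Phi$ component by component via Definition~\ref{rem:characteristics} and Definition~\ref{DeltaD}, then reading off the integrand as a function on $\Omega^h$---is exactly the natural and correct way to justify it, and nothing is missing.
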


We apply  the Fenchel-Rockafellar duality (appendix \ref{FRC}) to get the dual of (\ref{frprim}) :

\begin{equation} 
\label{mydual} 
\sup_{\Phi}   - \Fc^{h,\star} ( - \Phi)  -  h \,  \E_{\bPP^h}(\exp( \dfrac{ \Delta \Phi }{h} ))
   \end{equation}
 
\begin{remark} 
\label{R1}  
A simplification is possible in the setting of lemma \ref{LLL}. We have   
have  $M_k^\star =  \iota_{\phi \le 0 } $ for $k= 1,N_T-1$ enforcing 
  $-\phi_{\nu_k} +  F^\star(-\phi_{m_k}) \le  0$.  Because  $-  h \,  \E_{\bPP^h}(\exp(  {\Delta \Phi }/{h}) $ is decreasing in $\phi_{\nu_k} $, 
  this constraint necessarily saturates as $ \Delta \Phi $ is increasing in $\phi_{\nu_k}$. We 
 can set $M_k^\star( \phi_{\nu_k} ) = 0 $, and replace  the variable $\phi_{\nu_k} $ by $  F^\star(-\phi_{m_k})$ in $ \Delta \Phi $. 
 
 Conversely, $M_k^\star $ can be kept as the Legendre Fenchel transform of a general convex function $M_k$ acting on $\nu_k$  and to 
 be added to $F$. \Nte{See the proof of lemma \ref{LLL}.} 
\end{remark} 


\begin{proposition}
    Assuming  the supremum in (\ref{mydual}) is attained by  $\Phi$,   then $\Phi$ induces a measure $\PP^{h}$ through
    \label{prop:optimaldensity}
    \begin{equation*}
        \frac{\diff \PP^{h}}{\diff \bPP^h } = \exp\left(\frac{\Delta \Phi}{h}\right),
    \end{equation*}
    $\PP^{h}$ is the optimal solution of (\ref{frprim})
    for the constraints functions $M,C$ that are finite under $\PP^{h}$.
\end{proposition}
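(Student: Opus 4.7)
The plan is to combine the first-order optimality condition in the dual (\ref{mydual}) with Fenchel--Rockafellar strong duality. The candidate primal variable is defined directly by the formula in the statement, and the goal is to show (i) its image under $\Delta^\dagger$ together with $\Phi$ satisfies the Fenchel equality, and (ii) the primal value at $\PP^h$ equals the dual value at $\Phi$, so by weak duality $\PP^h$ must be a minimizer.

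First, I would derive the Euler--Lagrange condition at the dual optimum. For any admissible perturbation $\Psi$ in the dual variables $(\{\phi_{\nu_k}\},\{\phi_{m_k}\},\{\Lambda_k\})$, the Gâteaux derivative of the exponential term is
\[
\frac{d}{d\varepsilon}\bigg|_{\varepsilon = 0} h\,\E_{\bPP^h}\!\left(\exp\!\left(\tfrac{\Delta(\Phi+\varepsilon\Psi)}{h}\right)\right) = \E_{\bPP^h}\!\left(\exp\!\left(\tfrac{\Delta\Phi}{h}\right)\Delta\Psi\right) = \langle \Delta^\dagger \PP^h,\Psi\rangle,
\]
using Lemma \ref{dpa} and the proposed Radon--Nikodym formula to rewrite the expectation under $\PP^h$. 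Optimality of $\Phi$ then reads as the subdifferential inclusion $\Delta^\dagger \PP^h \in \partial \Fc^{h,\star}(-\Phi)$, which, by closed convex proper duality of $\Fc^h$ (guaranteed by Lemma \ref{LLL} and the convexity established after (\ref{FH2})), is equivalent to $-\Phi \in \partial \Fc^h(\Delta^\dagger \PP^h)$ together with the Fenchel equality
\[
\Fc^h(\Delta^\dagger \PP^h) + \Fc^{h,\star}(-\Phi) = -\langle \Phi, \Delta^\dagger \PP^h\rangle = -\E_{\PP^h}(\Delta\Phi).
\]

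Next I would evaluate the primal objective at $\PP^h$. Using the definition of $\KL$ in the paper and the explicit density,
\[
h\,\KL(\PP^h|\bPP^h) = \E_{\PP^h}(\Delta\Phi) - h,
\]
while $h\,\E_{\bPP^h}(\exp(\Delta\Phi/h)) = h\,\PP^h(\Omega^h) = h$ after the saturation argument of Remark \ref{R1} (which ensures $\PP^h$ is a probability measure through the normalization enforced by $\phi_{\nu_0}$). Adding the Fenchel equality to $h\,\KL(\PP^h|\bPP^h)$ gives
\[
\Fc^h(\Delta^\dagger \PP^h) + h\,\KL(\PP^h|\bPP^h) = -\Fc^{h,\star}(-\Phi) - h\,\E_{\bPP^h}\!\left(\exp\!\left(\tfrac{\Delta\Phi}{h}\right)\right),
\]
which is exactly the dual value at $\Phi$. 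By weak duality this value is a lower bound on the infimum of (\ref{frprim}), hence $\PP^h$ achieves it and is a minimizer. Strong duality, inherited from the Fenchel--Rockafellar framework of appendix \ref{FRC}, confirms that the infimum is attained and that no gap exists.

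The hard part is handling the non-smooth pieces of $\Fc^{h,\star}$, namely the indicator components $M_k^\star = \iota_{\phi \le 0}$ and the Legendre duals of the hard constraint functions $C_k$. The subdifferential inclusion is meaningful only at points where the associated constraints are active or finite, which is precisely the hypothesis \emph{"for the constraints functions $M,C$ that are finite under $\PP^h$"} in the statement. Under this hypothesis, the Fenchel equality can be applied component by component (using the saturation of Remark \ref{R1} to eliminate the $\phi_{\nu_k}$ block via $\phi_{\nu_k} = F^\star(-\phi_{m_k})$), so the Euler--Lagrange identity indeed holds as a genuine equality of finite quantities rather than as a formal extended-real statement. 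This is the technical point where care is needed, but it does not require any calculation beyond the standard perspective-function calculus already invoked in the proof of Lemma \ref{LLL}.
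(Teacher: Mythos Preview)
Your argument is correct, but it takes a longer and more explicit route than the paper's own proof. The paper simply invokes Theorem~\ref{tfr} directly: since $\Phi$ is a dual maximizer, the theorem furnishes a primal minimizer $\PP^h$ satisfying $\Delta\Phi \in \partial\Gc(\PP^h) = h\,\partial_{\PP^h}\KL(\PP^h|\bPP^h)$, and a one-line computation of the subdifferential of the (unnormalized) relative entropy yields $\Delta\Phi = h\log(\diff\PP^h/\diff\bPP^h)$, i.e.\ the density formula. Optimality of $\PP^h$ is thus taken for granted from the abstract theorem rather than verified.

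You instead work from the dual side: you define $\PP^h$ as $\nabla\Gc^\star(\Delta\Phi)$, derive the \emph{other} optimality condition $\Delta^\dagger\PP^h \in \partial\Fc^{h,\star}(-\Phi)$ from the Euler--Lagrange equation of the dual, and then close the loop by computing the Fenchel equality and matching primal and dual values via weak duality. This is essentially a re-derivation, in this concrete setting, of the part of Theorem~\ref{tfr} that the paper takes as a black box. The benefit of your approach is that it is self-contained and makes explicit where the finiteness hypothesis on $M,C$ enters (through the Fenchel equality for $\Fc^h$); the cost is that you must separately argue for normalization of $\PP^h$, which you attribute somewhat loosely to Remark~\ref{R1}. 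That remark concerns $k\ge 1$ rather than $k=0$; the normalization actually comes from the $M_0$ component of the Fenchel equality (forcing $\PP^h_0=\nu_0$), but this is a minor wrinkle in an otherwise sound argument.
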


\begin{proof}
We apply theorem \ref{tfr}. 
    The first optimality condition is given by:
    \begin{equation*}
        \Delta \Phi  \in h \, \partial_{\PP^{h }} \KL(\PP^{h}|\bPP^h)
    \end{equation*}
    which leads to :
    \begin{equation}
        \Delta \Phi  = h \, \log\left(\frac{\diff \PP^{h}}{
            \diff \bPP^h}\right)
    \end{equation}
\end{proof}

We can therefore  identifiy the  law of  $\PP^h$  and its marginals as functions of the dual potentials.
We  use the folowing factorisation which   will also be useful to implement Sinkhorn algorithm below. 

\begin{definition}
    For $i=0,N_T-1$, let $\Delta_{i, i+1}$ the transitional part of $\Delta$ ( see definition \ref{dpa}) depending only on $  \phi_{m_i}$:
    \begin{equation*}
        \Delta_{i, i+1} (\phi_{m_i})  [ x_i, x_{i+1} ]  = B(x_{i}, x_{i+1}) \cdot  \phi_{m_i}(x_i),
    \end{equation*}
    and use the following simplified notation $ G_k \coloneqq \{ G_i(x_{\tau_{i}}) \}_{i \in \Ic_k} $.
    Let $\psi^u_i, \psi^d_i$ potential functions factorizing the kernel after  (Down) and before (Up) time  $t_i$:
    \begin{align*}
        \psi^u_i(x_i) & = \log\left(\int \rho_0 \prod_{k=0}^{i-1} \exp\left(\frac{\Delta_{k,k+1} ( \phi_{m_k}) }{h} + \phi_{\nu_k} + 
        \frac{ \vv{\Lambda_k} \cdot \vv{G_k} }{h}  \right) \bPP_{k,k+1}^h \diff x_{[0, i-1]}\right)         \\
        \psi^d_i(x_i) & = \log\left(\int \prod_{k=i}^{N_T-1} \exp\left(\frac{\Delta_{k,k+1} ( \phi_{m_k}) }{h} + \phi_{\nu_{k+1}} + 
         \frac{ \vv{\Lambda_{k+1}} \cdot \vv{G_{k+1}} }{h}\right) \bPP_{k,k+1}^h \diff x_{[i+1, N_T]}\right)
    \end{align*}
    \end{definition}
\begin{proposition}
    \label{prop_markovian}
   Let $\PP^{h}$ an optimal solution of \eqref{frprim}. The following properties hold true :
    \begin{itemize}
        \item Its density  of the joint law at time $t_i$ and $t_{i+1}$  with respect to the reference measure is given by :
              \begin{align*}
                \PP^{h}_{i, i+1}(x_i, x_{i+1})   = & \exp  [  \psi^u_i(x_i) + \phi_{\nu_{i}}(x_i) +  ( \vv{\Lambda_i} \cdot \vv{G_i}    )/h                                                     \\
                                                               & \hphantom{\exp(} + \Delta_{i, i+1} ( \phi_{m_i})  [x_i, x_{i+1} ] /h                                                                                       \\
                                                               & \hphantom{\exp(} +  ( \vv{\Lambda_{i+1}} \cdot \vv{G_{i+1}} )/h  + \phi_{\nu_{i+1}}(x_{i+1}) + \psi^d_{i+1}(x_{i+1})
                                                            ]     \,\,   \bPP_{i, i+1}^h( x_i, x_{i+1})                                 
                                                                   \end{align*}
                     \item Its marginal are given by :
              \begin{align*}
                  \nu_k (x_k) = \PP^{h}_k(x_k) & = \exp(\psi^u_k(x_k) + \phi_{\nu_k}(x_k) +  ( \vv{\Lambda_k} \cdot \vv{G_k} )/h  + \psi^d_k(x_k))
              \end{align*}
    \end{itemize}

\end{proposition}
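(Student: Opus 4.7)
The starting point is Proposition \ref{prop:optimaldensity}, which gives the Radon-Nikodym density
\[
\frac{\diff\PP^h}{\diff\bPP^h}(x_0,\ldots,x_{N_T})=\exp\!\left(\frac{\Delta\Phi(x_0,\ldots,x_{N_T})}{h}\right).
\]
Plugging in the explicit form of $\Delta\Phi$ from Lemma \ref{dpa}, the exponent splits as a telescoping sum of terms that are either local (depending only on one $x_k$, namely $\phi_{\nu_k}(x_k)$ and $\Lambda_k\cdot G_k/h$) or transitional (depending on $(x_k,x_{k+1})$, namely $\Delta_{k,k+1}(\phi_{m_k})[x_k,x_{k+1}]/h$). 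This is precisely the structure that will allow us to factor the density through the Markov structure of $\bPP^h$.

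The plan is then to carry out the integrations defining the joint law $\PP^h_{i,i+1}$ and the marginal $\PP^h_k$ by exploiting the Markov factorization of the reference measure $\bPP^h$ inherited from the Euler-Maruyama construction: we can write $\diff\bPP^h$ as a product over consecutive pairs built from the two-marginal kernels $\bPP^h_{k,k+1}$. First I would integrate out $x_{[0,i-1]}$: all factors appearing in the definition of $\psi^u_i(x_i)$ depend solely on these variables and on $x_i$, so the integral collapses to $\exp(\psi^u_i(x_i))$ by the very definition of $\psi^u_i$. Next I would integrate out $x_{[i+2,N_T]}$, and the analogous argument produces the factor $\exp(\psi^d_{i+1}(x_{i+1}))$. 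The remaining terms, indexed by $k=i,i+1$, cannot be absorbed into either integral because they involve $x_i$, $x_{i+1}$ explicitly; these are exactly the local contributions $\phi_{\nu_i}(x_i)+\Lambda_i\cdot G_i/h$, the transition $\Delta_{i,i+1}(\phi_{m_i})[x_i,x_{i+1}]/h$, and $\phi_{\nu_{i+1}}(x_{i+1})+\Lambda_{i+1}\cdot G_{i+1}/h$, yielding the announced formula against $\bPP^h_{i,i+1}$.

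For the marginal formula, I would proceed identically but with a single reference time $k$: integrating out $x_{[0,k-1]}$ gives the factor $\exp(\psi^u_k(x_k))$ directly, while integrating out $x_{[k+1,N_T]}$ gives $\exp(\psi^d_k(x_k))$. The only terms surviving both integrations are the local contributions at time $k$ itself, namely $\phi_{\nu_k}(x_k)+\Lambda_k\cdot G_k/h$, which matches the claimed expression.

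The principal technical obstacle is not conceptual but bookkeeping: one must keep careful track of which local term $\phi_{\nu_k}$ or $\Lambda_k\cdot G_k$ is absorbed into $\psi^u$, which into $\psi^d$, and which remains explicit, and one must reconcile the indexing in the two definitions (where $\psi^u_i$ collects $\phi_{\nu_k}$ for $k<i$ and $\psi^d_i$ collects $\phi_{\nu_{k+1}}$ for $k\geq i$) so that exactly one copy of each local term appears in the final joint density. A secondary subtlety lies in writing the Markov factorization of $\bPP^h$ consistently through the joint laws $\bPP^h_{k,k+1}$ rather than conditional kernels, but since $\bPP^h$ is constructed as a Markov chain this is unambiguous and only introduces cancelling factors of the one-marginal densities.
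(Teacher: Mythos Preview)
Your proposal is correct and follows essentially the same route as the paper: start from the optimal density of Proposition~\ref{prop:optimaldensity}, decompose $\Delta\Phi$ into past, present, and future blocks using the Markov factorization of $\bPP^h$, and integrate out the past and future variables to produce $\psi^u$ and $\psi^d$ respectively. The paper packages the same argument with the auxiliary notation $\Delta^u_k$, $\Delta^d_k$, and $\overline{\Delta}_{k,k+1}$, but the content is identical to what you describe.
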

\begin{proof}
    In Appendix \ref{proof_propmarkov}.
    \Nte{Note we are  abusing notations using the same variables for the measures and their densities. }
    
\end{proof}

The following proposition is important for the implementation of Sinkhorn algorithm.
\begin{proposition}
\label{ud} 
Remark that the  quantities $\psi^u_k$, $\psi^d_k$ can be computed recursively :
    \begin{align*}
        \psi^{u}_{k+1} & = \log\left(\int \exp\left(\psi^{u}_k + \frac{\Delta_{k,k+1}  ( \phi_{m_k})}{h}  + \phi_{\nu_k} + ( \vv{\Lambda_k} \cdot \vv{G_k} )/h  \right) \bPP_{k,k+1}^h \diff x_k\right)       \\
        \psi^{d}_{k-1} & = \log\left(\int \exp\left(\psi^{d}_k + \frac{\Delta_{k-1,k}  ( \phi_{m_{k-1} })}{h}  + \phi_{\nu_{k}} + ( \vv{\Lambda_{k}} \cdot \vv{G_{k}} )/h  \right) \bPP_{k-1,k}^h \diff x_{k}\right)
    \end{align*}
    with $\psi^u_0 = \log \nu_0$ and $\psi^d_{N_T} = 0$.
\end{proposition}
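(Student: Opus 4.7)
The plan is to verify the two claimed recursions by a direct bookkeeping computation that peels off one factor of the product in the definitions of $\psi^u_k$ and $\psi^d_k$. Both cases are mirror images of each other, so I would write out the argument for $\psi^u$ in full and then indicate that $\psi^d$ follows by reversing the direction of time.

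First I would settle the initial conditions. Applied to $i=0$, the definition of $\psi^u_i$ has an empty product (over $k=0,\ldots,-1$) and no integration variables: it collapses to $\exp(\psi^u_0(x_0)) = \rho_0(x_0)$, and since the constraint $M_0$ imposes $\nu_0=\rho_0$, we obtain $\psi^u_0=\log\nu_0$. Applied to $i=N_T$, the definition of $\psi^d_i$ has an empty product (over $k=N_T,\ldots,N_T-1$) and empty integration, so $\exp(\psi^d_{N_T})=1$ and $\psi^d_{N_T}=0$.

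Next, for the inductive step on $\psi^u_{k+1}$, I would separate the last factor $k'=k$ from the product in its definition, and separate the last integration variable $x_k$:
\begin{align*}
\exp(\psi^u_{k+1}(x_{k+1}))
= \int \Bigl[ & \int \rho_0 \prod_{k'=0}^{k-1} \exp\Bigl(\tfrac{\Delta_{k',k'+1}(\phi_{m_{k'}})}{h} + \phi_{\nu_{k'}} + \tfrac{\Lambda_{k'}\cdot G_{k'}}{h}\Bigr)\bPP^h_{k',k'+1}\, \diff x_{[0,k-1]} \Bigr] \\
& \times \exp\Bigl(\tfrac{\Delta_{k,k+1}(\phi_{m_k})[x_k,x_{k+1}]}{h} + \phi_{\nu_k}(x_k) + \tfrac{\Lambda_k\cdot G_k(x_k)}{h}\Bigr)\,\bPP^h_{k,k+1}(x_k,x_{k+1})\, \diff x_k,
\end{align*}
which is legitimate by Fubini since all the integrands are positive. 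By definition the bracketed inner integral equals $\exp(\psi^u_k(x_k))$, and taking the logarithm of both sides yields precisely the recursion stated in the proposition. The same splitting at the first factor $k'=k-1$ in the definition of $\psi^d_{k-1}$, with integration over $x_k$ isolated, gives the reverse-time recursion for $\psi^d$.

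There is no real obstacle here beyond careful indexing; the only thing to watch is that $\bPP^h_{k,k+1}$ is being manipulated simultaneously as a joint density and as a kernel, so I would briefly note that $\rho_0$ together with the product of the consecutive two-marginal densities $\bPP^h_{k',k'+1}$ factorizes the Markov reference law $\bPP^h$ on $\Omega^h$ (coming from its Euler--Maruyama construction), which is what justifies recombining the two integrations by Fubini. Once that is clear, the proof is a one-line computation plus the base cases.
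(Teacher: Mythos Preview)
Your proposal is correct and follows essentially the same approach as the paper: peel off the last (respectively first) factor of the defining product, recognise the remaining inner integral as $\exp(\psi^u_{k})$ (respectively $\exp(\psi^d_{k})$), and take logarithms, with the $\psi^d$ case handled symmetrically. Your explicit treatment of the base cases and the remark on Fubini/Markov factorisation add a touch of rigor the paper leaves implicit, but the argument is otherwise identical.
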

\begin{proof}
    Let $i \in \Kc$ and $x_i \in \Xc_i$. We can compute the following integral :
    \begin{align*}
        \psi^u_i(x_i) & = \log\left(\int \rho_0 \prod_{k=0}^{i-1} \exp\left(\frac{\Delta_{k,k+1}}{h}  ( \phi_{m_k}) + \phi_{\nu_k} +  ( \Lambda_k \cdot \vv{G_k})/h  \right) \bPP_{k,k+1}^h \diff x_{[0, i-1]}\right)                    \\
                      & = \log\left(\int \left(\int \rho_0 \prod_{k=0}^{i-2} \exp\left(\frac{\Delta_{k,k+1}}{h}  ( \phi_{m_k})  + \phi_{\nu_k} + (\Lambda_k \cdot \vv{G_k})/h  \right) \bPP_{k,k+1}^h \diff x_{[0, i-2]}\right) \right. \\
                      & \hphantom{= \log\left(\int \right.} \left. \exp\left(\frac{\Delta_{i-1,i}  ( \phi_{m_i})}{h} + \phi_{\nu_{i-1}} + ( \Lambda_{i-1} \cdot \vv{G_{i-1}})/h  \right) \bPP_{i-1,i}^h \diff x_{i-1}\right)              \\
                      & = \log\left(\int \exp\left(\psi^{u}_{i-1} + \frac{\Delta_{i-1,i}  ( \phi_{m_i})}{h} + \phi_{\nu_{i-1}} + ( \Lambda_{i-1} \cdot \vv{G_{i-1}})/h \right)\bPP_{i-1,i}^h \diff x_{i-1}\right).
    \end{align*}
    A symmetric computation  can be done for $\psi^d_i$.
\end{proof}

\section{Numerical method}
\label{ss}

\subsection{Sinkhorn Algorithm } 

Sinkhorn algorithm is based on the iterative ``\`a la Gauss-Seidel'' resolution of the optimality conditions of the dual problem (\ref{mydual}).
This amount  to coordinate-wise (in $\Phi$) maximisation. We use   the notations introduced in proposition \ref{ud}, the additional index $n$ 
corresponds to the   Sinkhorn iterations rank: 
\begin{equation}
    \label{sinkhorn}
    \left\{  \begin{array}{ll}     
        \phi_{\nu_k}^{n+1} = \arg\,\sup_{\phi}   & -M_k^\star(-\phi +  F^\star( -\phi^n_{m_k} ))      -   \ldots  \\[10pt]                                                                                                                                                                                                                         
      &    h\expect_{\bPP_k^h}*{\exp(\psi^{u,n+1}_k + \phi +  ( \vv{\Lambda^n_k} \cdot \vv{G_k} )/h + \psi^{d,n}_k)}                \\[10pt]                      
        \vv{\Lambda^{n+1}_k} = \arg\,\sup_{\vv{\Lambda}} &  -C_k^\star(-\vv{\Lambda}) - \ldots  \\[10pt] 
        &  h\expect_{\bPP_k^h}*{\exp(\psi^{u,n+1}_k + \phi^{n+1}_{\nu_k} +  ( \vv{\Lambda} \cdot \vv{G_k} )/h  + \psi^{d,n}_k)}                \\[10pt]
        \phi_{m_k}^{n+1} = \arg\,\sup_{\phi}   &  -M_k^\star(-\phi_{\nu_k} +  F^\star( -\phi))                                                                                          \\[10pt]
    &   - h\expect_{\bPP_{k,k+1}^h(x,.) }*{\exp(\psi^{u,n+1}_k + \phi_{\nu_k}^{n+1} + ( \vv{\Lambda^{n+1}_k} \cdot \vv{G_k})/h + \Delta_{k,k+1} (  \phi )   + \psi^{d,n}_k)} 
    \end{array} \right.
\end{equation}
and this is  $\forall  k \in [0,N_T-1]$. 
 Each subproblem is a relaxation of the global strictly concave problem and therefore is well posed.   Unlike in the classic OT marginal constraints 
 case there are no explicit formula for the optimal potentials and it is necessary to use a Newton method to solve the  optimality conditions equations. 
The maximisations can  be performed point-wise in $x_k$ for the potentials $(phi_{\nu_k},  \phi_{m_k})$  and on the vectors $(\Lambda_k)$. \\

The algorithm is decomposed as follows: 

\begin{algorithm}[H]
    \caption{Sinkhorn algorithm for problem (\ref{mydual})}
    \label{algo:sinkhorn}
    \KwIn{Number of timesteps $N_T$, support of each space $\Xc_k^{\diff x}$}
    \KwIn{Stopping tolerance $\epsilon$, reference measure $\bPP^h$}
    \KwIn{Initial potentials $\phi_{\nu_k}^0$, $\phi_{b_k}^0$, $\Lambda_i^0$}
    \KwResult{Numerical solution of problem (\ref{mydual})}
    $\psi^{u,0}_0 \gets \log \nu_0$\;
    $\psi^{d,0}_{N_T} \gets 0$\;
    \For{$n \gets 0$ \KwTo $N$}{
        \For{$k \gets N_T-1$ \KwTo $0$}{
            $\psi^{d,n}_k \gets$ UpdatePsiDown($\psi^{d,n}_k$, $\phi_{\nu_k}^{n}$, $\phi_{b_k}^{n}$, $\Lambda^n_i$, $\bPP^h$)\;
        }
        \For{$k \gets 0$ \KwTo $N_T - 1$}{
            $\phi_{\nu_k}^{n+1} \gets$ SolveMarginal($\phi_{\nu_k}^{n}$, $\phi_{m_k}^{n}$, $\Lambda^n_k$, $\psi^{u,n}_k$, $\psi^{d,n}_k$, $\bPP^h$)\;
            $\Lambda_k^{n+1} \gets$ SolvePrices($\phi_{\nu_k}^{n+1}$, $\phi_{m_k}^{n}$, $\Lambda^n_k$, $\psi^{u,n}_k$, $\psi^{d,n}_k$, $\bPP^h$)\;
            $\phi_{m_k}^{n+1} \gets$ SolveDriftVol($\phi_{\nu_k}^{n+1}$, $\phi_{m_k}^{n}$, $\Lambda^{n+1}_k$, $\psi^{u,n}_k$, $\psi^{d,n}_k$, $\bPP^h$)\;
            $\psi^{u,n+1}_{k+1} \gets$ UpdatePsiUp($\psi^{u,n+1}_{k+1}$, $\phi_{\nu_k}^{n+1}$, $\phi_{b_k}^{n+1}$, $\Lambda^{n+1}_k$, $\bPP^h$)\;
        }
        $\phi_{\nu_{N_T}}^{n+1} \gets$ SolveMarginal($\phi_{\nu_{N_T}}^{n}$, $\Lambda^{n}_k$, $\psi^{u,n}_{N_T}$, $\psi^{d,n}_{N_T}$, $\bPP^h$)\;
        $\Lambda_{N_T}^{n+1} \gets$ SolvePrices($\phi_{\nu_{N_T}}^{n+1}$, $\Lambda^{n}_k$, $\psi^{u,n}_{N_T}$, $\psi^{d,n}_{N_T}$, $\bPP^h$)\;
        $e_{\text{max}} \gets \frac{\|\Phi^{n+1} - \Phi^n\|_\infty}{\|\Phi^n\|_\infty}$\;
        \If{$e_{\text{max}} < \epsilon$}{
            \Return $\Phi$, $\Psi$\;
        }
    }
    \Return $\Phi$, $\Psi$\;
\end{algorithm}
The functions SolveMarginal, SolvePrices and SolveDriftVol are functions that solve the maximimization problems in \eqref{sinkhorn}.
The functions UpdatePsiUp and UpdatePsiDown correspond to the  recursive definitions in proposition \ref{ud}).

\subsection{Space  truncation and discretization}
\label{truncate}

We expect  the number of (Newton) iterates to solve the maximization problems (\ref{sinkhorn}) is finite and  the intermediate variables $\psi^d_k$ and $\psi^u_k$  (proposition 
\ref{ud}) can be stored instead of recomputed inside the sinkhorn loop. Then for all $k \in  1 \ldots N_t$,   the order of number of operations will be upper bounded by 
  the cost of computing  for all $x_k \in \Xc_k$   convolutions  in $x_{k+1} \in \Xc_{k+1}$ with the  kernel   
  \begin{equation} 
  \label{kernel} 
  \Kc(x_k,x_{k+1}) =   \exp\left( \frac{\Delta_{k,k+1}}{h}\right)  \bPP^h_{k,k+1}.
    \end{equation}    

Before discretizing in space we need to 
 truncate the support  so that it has a finite width. 
The simplest approach is to assume that the solution remains close to the reference measure and use its standard deviations $ \overline{\sigma_k}$.
for each timestep $t_k =  k\, h \quad i = 0.,\ldots,N_T$,  we restrict the computational domain to a $\delta$ multiple (usually $\delta = 5$) of the standard deviation: 
\[
    \overline{\Xc_k} = \left[ - \delta \,  v_k ,\,  \delta \, v_k  \right]
\]
where $v_k = \sqrt{v_0^2 + h \sum_{l = 0}^{k} \overline{\sigma_l}^2}$ and  $v_0$ is the standard deviation of the initial marginal $\nu_0$.  
This is also assuming the solution remains close to a martingale with $0$ mean.  There are many possible refinements, for exemple 
using a priori guess by interpolating coarse solution produced by the time multiscale strategy (see below).  

On these  compact supports, we discretize the potentials on a regular grid of size $\diff x$ and use a  parabolic scaling: for all $k$, 
\begin{equation} 
\label{PS} 
    \diff x = K \bs_k \sqrt{h}. 
\end{equation} 
 $K$  is finite (we used $K = 50$). 
  Under  this choice the number of points at time $t_k$  in space $N_{\Xc_k}$ is of order $O(\sqrt{N_T})$.
  
  We again  assume the  solution sufficiently close to a diffusion with  the  reference measure as law. 
  For all  $x_k \in \Xc_k$ , we 
    and truncate 
  the  transition kernel $\Kc(x_k,.)$   (\ref{kernel})    to $0$ outside a  sliding window $[x_k - \delta  \bs_k \sqrt{h}, x_k + \delta  \bs_k \sqrt{h} ]$.
  Therefore  $\Kc(x_k,.)$ has finite size $\delta  \, K$.   This is for a martingale process and needs of course to be adjusted using the drift if any.  
  
The number of operation for one Sinkhorn loop for this implementation is therefore of order $ O( N_T \, N_{\Xc_k}) = O( N_T^{3/2}) $.
This is confirmed experimentaly in \cite{Mpaper}.

\subsection{Acceleration of Sinkhorn iterations }
\label{sec:multiscale}

Sinkhorn iterations are known to converge slowly when the ratio between displacement and regularization parameter or  ``temperature''  goes to $0$.
In our case while the temperature scales with $h$ (and therefore goes to 0), the displacement betwen marginals of $\PP^h$ decreases.
We do not have a rigorous study of the speed of convergence for our sinkhorn iterates but we have implemented two  acceleration procedures 
that seems, at least experimentaly to improve it.

Since one Sinkhorn iteration $\Phi^{k+1} = s(\Phi^k)$ is a fixed-point iteration,  we accelerate convergence using  Anderson acceleration \cite{anderson1965} \cite{anderson2011}. \\

Because the complexity of algorithm \ref{algo:sinkhorn} scales with
the number of timesteps $N_T$,   we also  perform a   coarse to fine time discretisation  warm restart : 
One way to do so (as in  \cite{Mpaper}) consists in interpolating in time 
the coarse potentials  to initialize on a finer time grid. 
Alternately one can interpolate  the coarse solution to construct a new reference measure.  Indeed the method can be extended to non local in time and space reference 
volatilities.   The procedure reduces the number of iterations of the finer time grid and therefore computing  time.
It also seems to stabilize the optimisation as  Sinkhorn direct fine time grid arbitrary initialisation  does not  converge systematically.
Probably because the  price constraints are numerically infeasible with the initial volatility model.

\section{Application: Local volatility calibration}
\label{alsv} 

It is customary for this application to  minimize over positive martingale positive processes, hence to consider 
 processes of the form $X_t = \log S_t$ where $S_t$ is a martingale diffusion:
\begin{equation*}
    \frac{\diff S_t}{S_t} = \sigma(S_t, t) \,  {\diff \RR}_t.
\end{equation*}
Applying Ito's lemma, in terms of $X_t = \log S_t$, we obtain the following SDE:
\begin{equation*}
    \diff X_t = \mu(X_t, t) \, \diff t + \sigma(X_t, t) \, {\diff \RR}_t, \quad  \mu_t = -\frac{1}{2} \sigma_t^2
\end{equation*}

As noted in  \cite{guo2021optimal},  the process $S_t$ will be a martingale if 
\[
  b_t =   2 \mu_t + \sigma_t^2  =0 
\]
After discretisation,  this is becomes  non linear equation 
 in  the coefficients  $\beta_k$ and $\alpha_k$ defined in section \ref{sec:prelim} equations (\ref{beta_def}-\ref{alpha_def}):
\[
  b_k =   2 \beta_k + \alpha_k -  h \, \beta_k ^2  = 0  
\]
The variable $b_k$ cannot be represented as a conditional moment of $\PP$ and  we need to  back to the exponential form of the prices.
Instead of the variables $\beta$ and $\alpha$, we will use the variable $b_k$ corresponding to the choice $B(X, Y) = 1 - e^{Y - X}$:
\begin{align*}
    b_k(x) & = \frac{1}{h} \expect*{1 - e^{X_{k+1} - X_k}|X_k=x}                    \\
           & = \frac{1}{h} \frac{1}{e^{X_k}}\expect*{e^{X_k} - e^{X_{k+1}}|X_k = x} \\
           & = \frac{1}{h} \frac{1}{S_k}\expect*{S_k - S_{k+1}|X_k = x},
\end{align*}
$b_k = 0$ is the martingale constraint on the chain $S_{t_k}$.  In practice with use a soft constraint  by minimizing 
$F = c_{\text{mart}} \| b  \|_{L^2}$ with  $c_\text{mart} > 0 $ the strength of the matingale constraint penalization.  \\

For the price constraints, let $c_i \in \R^+$ be an observed price, we use the soft constraint $C_i$ a convex function with minima in $c_i$, for instance, $C_i =  \frac{1}{2}(\cdot - c_i)^2$.
We use the payoff function $G_i(x) = \max(0, e^x - K_i)$ for a call option with strike $K_i$, and $G_i(x) = \max(0, K_i - e^x)$ for a put option with strike $K_i$. 

For the initial marginal constraint, we propose using a hard constraint $M_0 = \iota_{\mu_0}$ whose dual is $\SP{\phi_{\nu_0}}{\mu_0}{}$, with $\mu_0 = \delta_{\log S_0}$. \\

Finally, we obtain the following problem:
\begin{align*}
    \mathcal{V} = \inf_{\PP^h} & \sum_0^{N_T-1}  h \expect_{\nu_k}*{F(b_k)}    
                                + M_0\left(\nu_0\right) + \sum_{i=1}^N C_i\left(g_i\right) + h \KL(\PP^h|\bPP^h) , 
\end{align*}
which  fits the framework of section \ref{ss} without reordering the calibration constraints using maturities  $(t_k)$s, 
$g_i =  \expect_{\PP^h_{\tau_i}}*{G_i({X_{\tau_i} }_\# \PP^h)}  $ is defined as in definition \ref{rem:characteristics}.

In  dual form where  (see remark \ref{R1})  we eliminate $\phi_{\nu_k} ( =  F^\star(-\phi_{m_k}) ), \, \forall k \in [1, N_T-1] $: 
\begin{align*}
    \mathcal{D} = \sup_{ \phi_{\nu_0}, \{ \phi_{m_k}\} , \{ \lambda_i \} }  & \expect_{\mu_0}*{\phi_{\nu_0} -   F^\star(-\phi_{m_0})} + \sum_{i=1}^{N_C} C_i^\star(\lambda_{i})         + h \expect_{\bPP^h}*{\exp\left(\frac{\Delta( \{  F^\star(-\phi_{m_k})  \} , \{ \phi_{m_k} \} ,\{  \lambda_i \} )}{h}\right)}. 
\end{align*}

\paragraph{Numerical results}

We construct a synthetic data set of price using  using a parametric local volatility surface.

The local volatility surface that we choose is the SSVI surface as presented in \cite{Gatheral_Jacquier_2012}.
We choose the at-the-money implied total variance for the money to be $\theta_t = 0.04t$. We choose a power-law parameterization of the function $\phi$
described in \cite{Gatheral_Jacquier_2012} as $\phi(\theta) = \eta \theta^{-\lambda}$. The at-the-money total implied variance is then
\[
    \sigma_{\text{BS}}^2(k, T) = \frac{\theta_t}{2} \left(1 + \rho \phi(\theta_t) k + \sqrt{(\phi(\theta_t) k + \rho)^2 + (1 - \rho^2)}\right),
\]
where $k$ is the log-moneyness $\log(K/F)$. The parameters are chosen as $\eta = 1.6$, $\lambda = 0.4$ and $\rho = -0.15$.
The resulting surface is shown in Figure \ref{fig:expesvi_simp}.
Prices are generated using the Black-Scholes formula. \\

We select five observation times to calibrate the model on generated prices : $t \in \{0.2, 0.4, 0.6, 0.8, 1.0\}$.
At  time $\tau_i$, we select the calls with strikes $K \in \{S_0 + 1 + 4k_i\}$ and the puts with strikes $K \in \{S_0 - 1 - 4k_i\}$ for $k_i \in \{0, 1, \dots, N_{C,i}\}$, with $N_{C} = (5, 7, 9, 10, 12)$. We calibrate
less points for the earlier maturities as mass is almost nonexistent far from the at-the-money price at these maturities.  
We choose $c_{\text{mart}} = 1\times 10^4$ as the penalization term for the martingale constraint. \\

We use the multiscale in time strategy and figure \ref{fig:expesvi_priceerr} shows the convergence of the price calibration cost versus the Sinkhorn iterations.
Each vertical bar corresponds to a refinement of the time discretisation and a reinitialisation of the reference measure.  The finest scale is 
$N_T=81$.   Figure \ref{fig:expesvi_itererr} shows the
$L^2$ norm of the relative iterate errors $\frac{\|\Phi^{k+1} - \Phi^k\|}{\|\Phi^k\|}$ versus Sinkhorm iterations at the finest scale. 
 Figure \ref{fig:expesvi_marterr} shows the $L^2$ norm of the martingale error. 
Finally, Figure \ref{fig:expe2_cal} shows the volatility  calibration results at each time. For each of the calibration times,
we show the reference implied volatility, the calibrated implied volatility, and the implied volatility generated by
the forward diffusion process with the same number of timesteps and the volatility of the solution. \\

Algorithm \ref{algo:sinkhorn} is implemented  in Python using the PyKeops library. For this simulation 
The program runs in approximately 10 minutes on a V100 GPU with 24GB of GDDR5 memory and an Intel Xeon 5217 8 core CPU with 192GB or DDR4 RAM.

\begin{figure}[ht]
    \centering
    \includegraphics[width=0.7\textwidth]{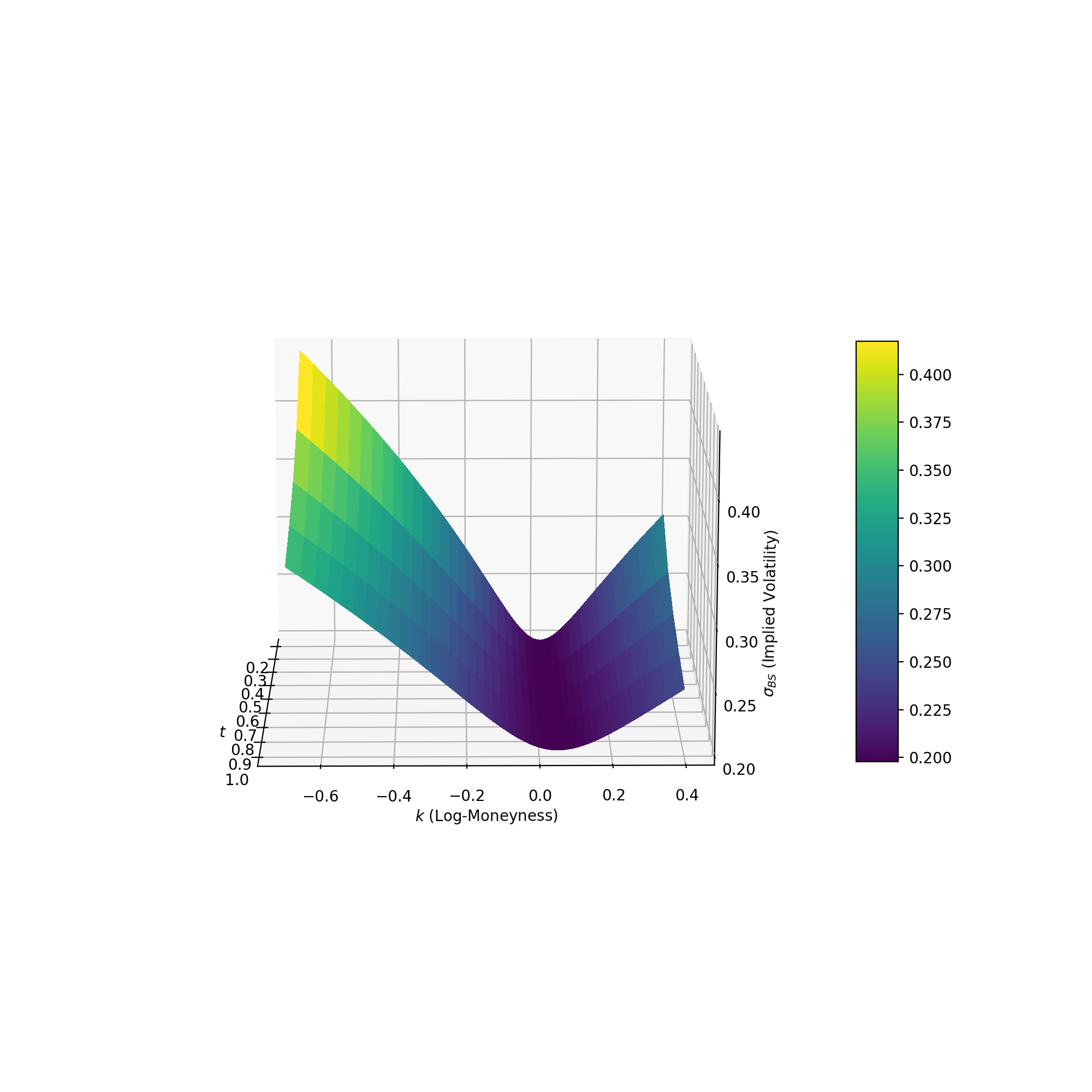}
    \caption{Generating model implied volatility}
    \label{fig:expesvi_simp}
\end{figure}

\begin{figure}[ht]
    \centering
    \begin{subfigure}[t]{0.45\textwidth}
        \includegraphics[width=\textwidth]{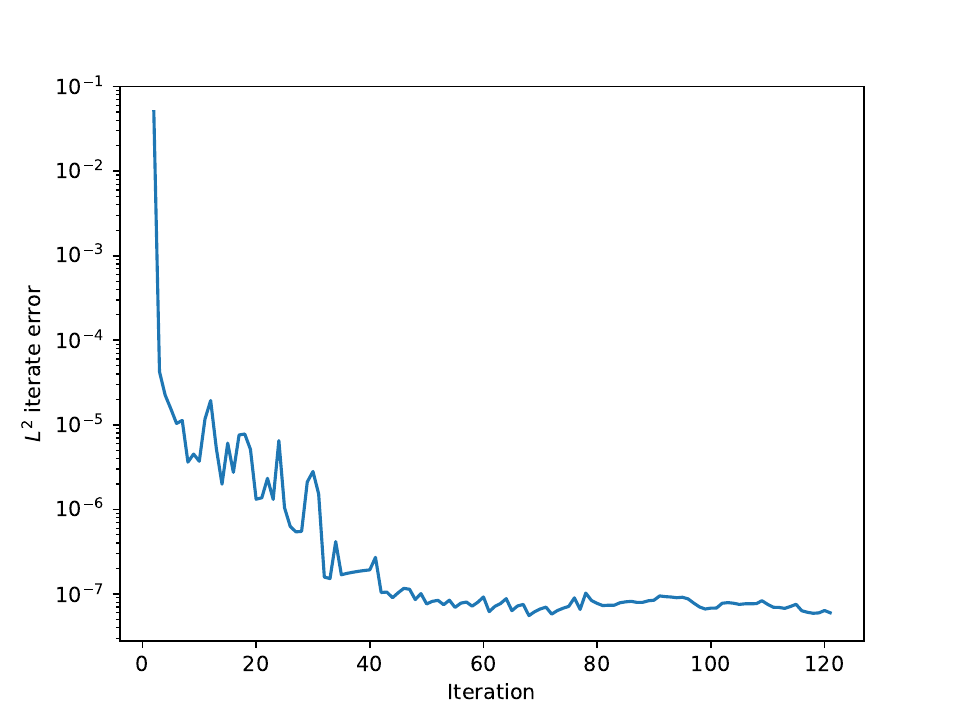}
        \caption{$L^2$ norm of iterate errors}
        \label{fig:expesvi_itererr}
    \end{subfigure}
    \begin{subfigure}[t]{0.45\textwidth}
        \includegraphics[width=\textwidth]{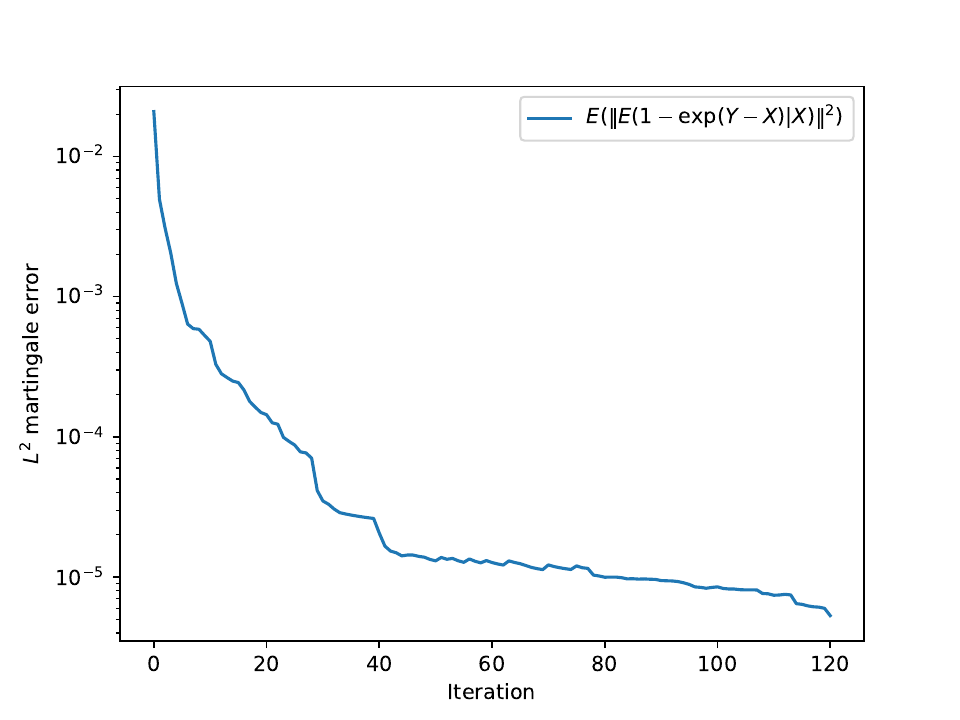}
        \caption{$L^2$ norm of the martingale error}
        \label{fig:expesvi_marterr}
    \end{subfigure}
    \begin{subfigure}[t]{0.45\textwidth}
        \includegraphics[width=\textwidth]{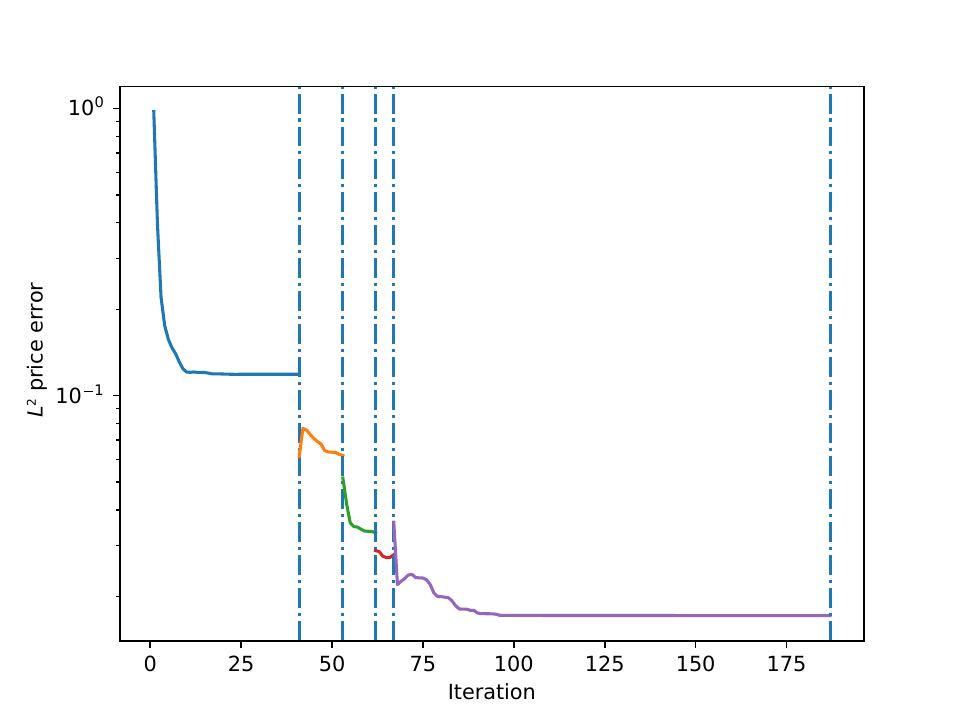}
        \caption{$L^2$ norm  of the price errors}
        \label{fig:expesvi_priceerr}
    \end{subfigure}
    \caption{Convergence curves}
    \label{fig:expe2_conv}
\end{figure}

\begin{figure}[ht]
    \centering
    \begin{subfigure}[t]{0.45\textwidth}
        \includegraphics[width=\textwidth]{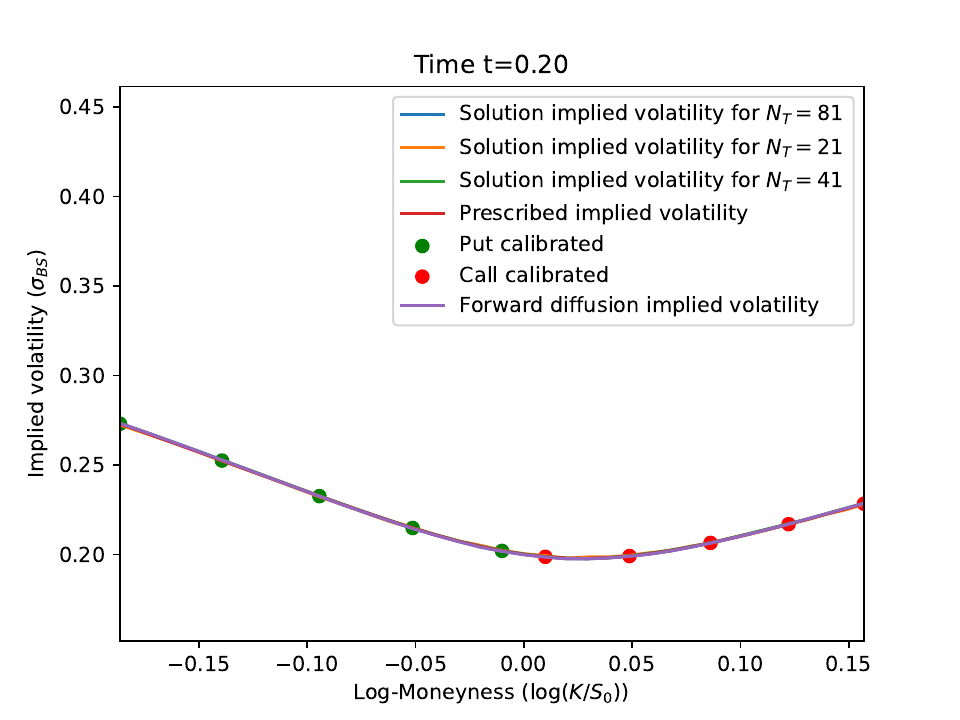}
        \caption{Calibration at time 0.2.}
        \label{fig:expesvi_cal02}
    \end{subfigure}
    \begin{subfigure}[t]{0.45\textwidth}
        \includegraphics[width=\textwidth]{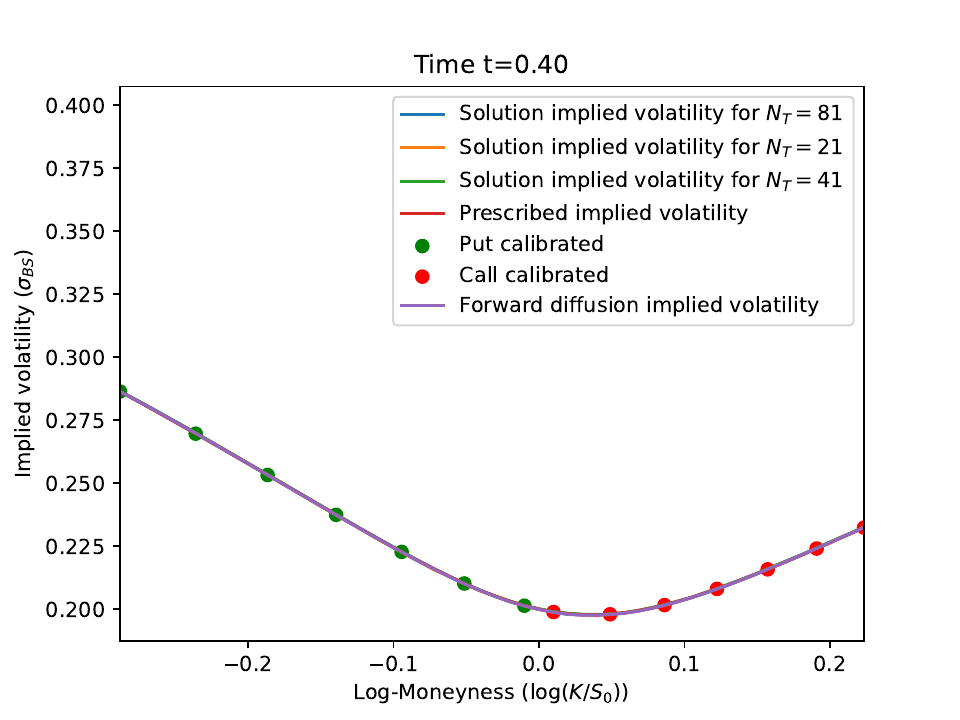}
        \caption{Calibration at time 0.4.}
        \label{fig:expesvi_cal04}
    \end{subfigure}
    \begin{subfigure}[t]{0.45\textwidth}
        \includegraphics[width=\textwidth]{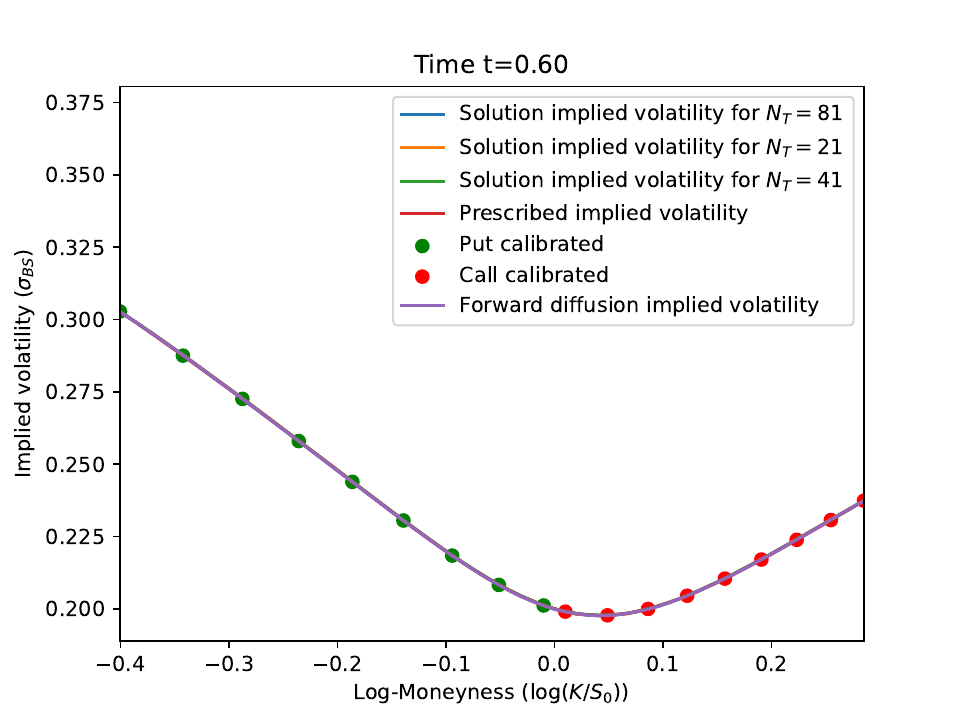}
        \caption{Calibration at time 0.6.}
        \label{fig:expesvi_cal06}
    \end{subfigure}
    \begin{subfigure}[t]{0.45\textwidth}
        \includegraphics[width=\textwidth]{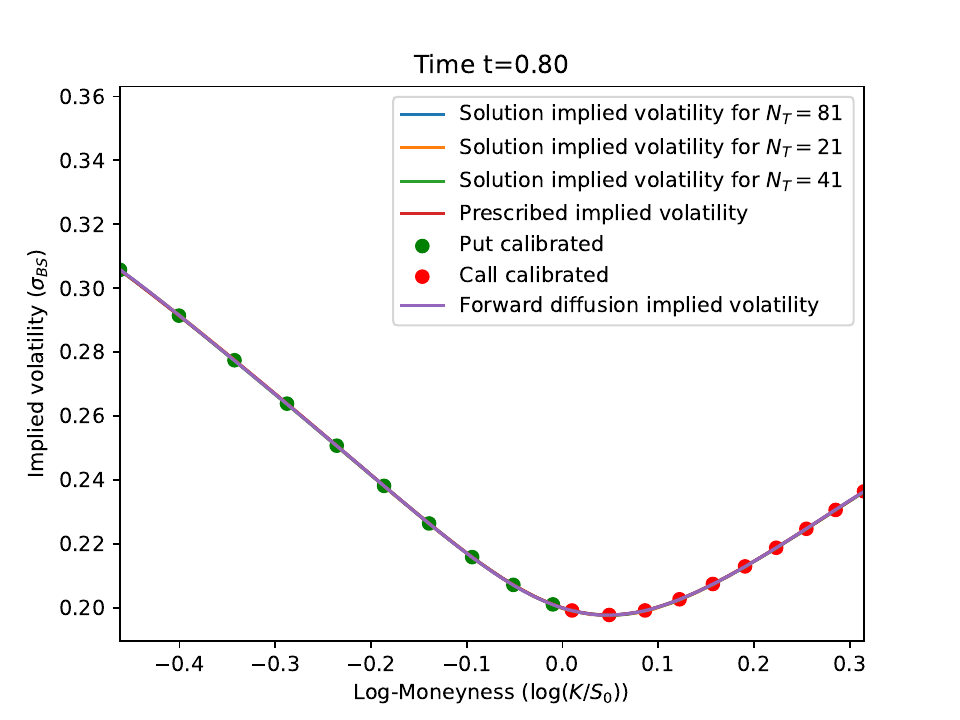}
        \caption{Calibration at time 0.8.}
        \label{fig:expesvi_cal08}
    \end{subfigure}
    \begin{subfigure}[t]{0.45\textwidth}
        \includegraphics[width=\textwidth]{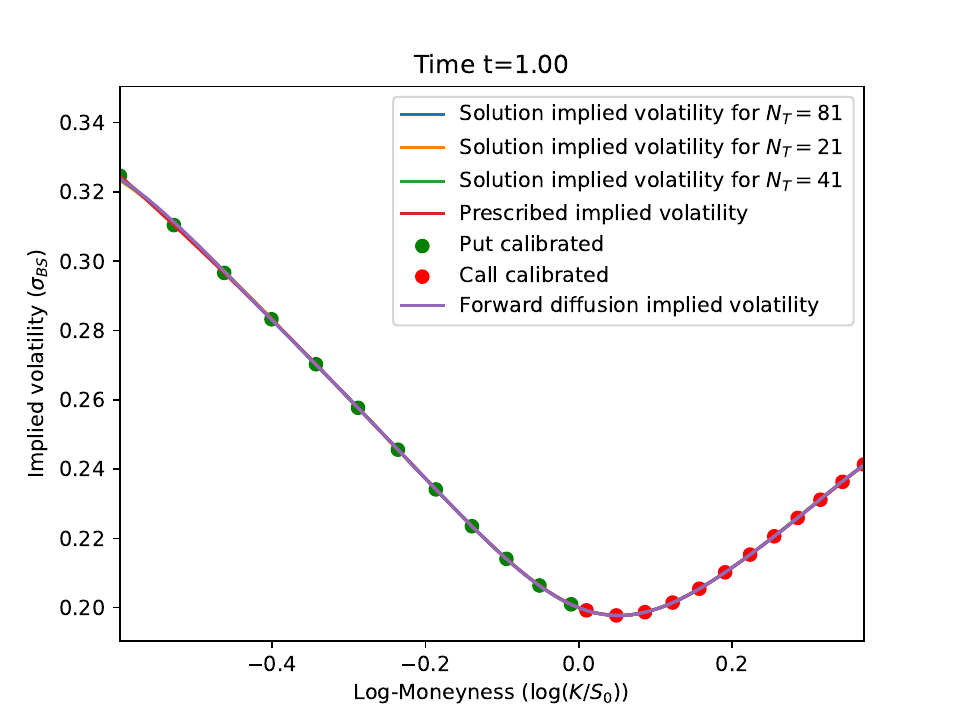}
        \caption{Calibration at time 1.}
        \label{fig:expesvi_cal10}
    \end{subfigure}
    \caption{Calibration results.}
    \label{fig:expe2_cal}
\end{figure}

\printbibliography

\appendix

\section{Fenchel Rockaffelar} 
\label{FRC} 
We  recall the abstract  Fenchel-Rockafellar theorem and hence the form of the primal problem 
\Nte{ Changer E et F } 
\begin{theorem}[Fenchel-Rockafellar]
    \label{tfr}
    Let $(E, E^*)$ and $(F, F^*)$ be two couples of topologically paired spaces.
    Let $\Delta : E \rightarrow F$ be a continuous linear operator and $\Delta^\dagger : F^* \rightarrow E^*$ be its adjoint. Let $\Fc : E^* \to \overline{\R}$ and $\Gc : F^* \to \overline{\R}$ be two lower semicontinuous and proper convex functions, 
    $\Fc^\star$ and $\Gc^\star$ their Legendre-Fenchel transform.
     If there exists $\PP \in F^*$ such that $\Gc(\PP) < +\infty$ and $\Fc$ is continuous at $\Delta^\dagger \PP$, then :
    \begin{equation*}
        \sup_{\Phi \in E} -\Fc^\star(-\Phi) - \Gc^\star(\Delta \Phi) = \inf_{\PP \in F^*} \Fc(\Delta^\dagger \PP) + \Gc(\PP),
    \end{equation*}
    and the $\inf$ is attained. Moreover, if there exists a maximizer ${\Psi} \in E$, then there exists $\PP \in F^*$ satisfying $\Delta {\Psi} \in \partial \Gc(\PP)$ and $\Delta^\dagger \PP \in -\partial \Fc^\star(- {\Psi})$.
\end{theorem}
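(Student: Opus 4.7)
The plan is to deduce the duality from the biconjugate theorem applied to a well-chosen perturbation function, following the classical argument of Rockafellar. Define the value function $v : E^* \to \overline{\R}$ by
\begin{equation*}
v(u) = \inf_{\PP \in F^*}\bigl\{\Fc(\Delta^\dagger \PP + u) + \Gc(\PP)\bigr\},
\end{equation*}
so that $v(0)$ is the primal infimum. Convexity of $v$ follows immediately from convexity of $\Fc, \Gc$ and linearity of $\Delta^\dagger$. Strong duality will amount to the identity $v(0) = v^{**}(0)$, which by the Fenchel–Moreau theorem is equivalent to $v$ being lower semicontinuous and proper at $0$; actually we will obtain the stronger property that $v$ is continuous and subdifferentiable at $0$.

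Next I would invoke the qualification hypothesis: since $\Gc(\PP_0) < +\infty$ and $\Fc$ is continuous at $y_0 := \Delta^\dagger \PP_0$, the pointwise bound $v(u) \le \Fc(y_0 + u) + \Gc(\PP_0)$ shows that $v$ is majorized near $0$ by a function continuous at $0$, hence $v$ is itself continuous at $0$ and $\partial v(0) \ne \emptyset$. Its conjugate is computed by the change of variable $y = \Delta^\dagger \PP + u$:
\begin{equation*}
v^*(\Phi) = \sup_{y, \PP}\bigl\{\langle \Phi, y\rangle - \langle \Delta\Phi, \PP\rangle - \Fc(y) - \Gc(\PP)\bigr\} = \Fc^*(\Phi) + \Gc^*(-\Delta \Phi).
\end{equation*}
Writing $v(0) = v^{**}(0) = \sup_{\Phi}\{-v^*(\Phi)\}$ and substituting $\Phi \mapsto -\Phi$ yields exactly the claimed equality $\inf_{\PP}\{\Fc(\Delta^\dagger \PP)+\Gc(\PP)\} = \sup_{\Phi}\{-\Fc^*(-\Phi) - \Gc^*(\Delta\Phi)\}$.

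Finally, attainment of the primal infimum is obtained by taking any $\PP \in -\partial v(0)$ (non-empty by the continuity step): the Fenchel–Young equality then forces $\Fc(\Delta^\dagger \PP) + \Gc(\PP) + \Fc^*(-\Psi) + \Gc^*(\Delta \Psi) = 0$ for any dual maximizer $\Psi$, which in turn is equivalent to the two subdifferential inclusions $\Delta\Psi \in \partial \Gc(\PP)$ and $\Delta^\dagger \PP \in -\partial\Fc^*(-\Psi)$. The main obstacle is the continuity/subdifferentiability step, which has to be handled in the locally convex paired-topology setting rather than in a Banach space; it rests on a Hahn–Banach separation applied to the strict epigraph of $v$ at $0$, and this is precisely what rules out a duality gap. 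The rest of the argument is routine conjugate calculus.
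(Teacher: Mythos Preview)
The paper does not prove this theorem; it is merely recalled in the appendix as a classical result, so there is no paper proof to compare against. Your perturbation-function route is the standard one, and the strong-duality portion is correctly sketched: convexity of $v$, the bound $v(u)\le \Fc(\Delta^\dagger\PP_0+u)+\Gc(\PP_0)$ forcing continuity of $v$ at $0$, the computation $v^*(\Phi)=\Fc^*(\Phi)+\Gc^*(-\Delta\Phi)$, and the identification $v(0)=v^{**}(0)$ all go through.

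The attainment paragraph, however, contains a genuine error. You write ``attainment of the primal infimum is obtained by taking any $\PP\in -\partial v(0)$,'' but $v$ is a function on $E^*$, so in the paired setting $\partial v(0)\subset E$: its elements are dual variables $\Phi$, not primal measures $\PP\in F^*$. What continuity of $v$ at $0$ actually buys you is $\partial v(0)\neq\emptyset$, and any $\Phi_0\in\partial v(0)$ satisfies $v(0)+v^*(\Phi_0)=0$, i.e.\ $-\Phi_0$ attains the \emph{dual} supremum. The qualification as stated therefore yields existence of a dual maximizer, not of a primal minimizer; this is in fact the standard conclusion of Fenchel--Rockafellar under a primal-side constraint qualification, and the clause ``the $\inf$ is attained'' in the statement appears to be inverted relative to the usual formulation. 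Your final Fenchel--Young step, deducing the two subdifferential inclusions from zero duality gap together with optimality of both $\PP$ and $\Psi$, is correct once both optima are known to exist---but your argument has not produced a primal minimizer, only a dual one.
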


We note the primal problem :
\begin{equation}
    \mathcal{V} := \min_{\PP \in F^*} \Fc(\Delta^\dagger \PP) + \Gc(\PP) 
\end{equation} 
and the dual :
\begin{equation}
    \mathcal{D} := \sup_{\Phi \in E} -\Fc^\star(-\Phi) - \Gc^\star(\Delta \Phi) 
\end{equation}

\section{Proof of Proposition \ref{prop_markovian}}
\label{proof_propmarkov}
First, we separate the sum of $\lambda_{g_i}$ per timesteps using the values defined above :
\begin{equation*}
    \sum_{i=0}^{N_C} \lambda_{g_i} G_i(x_i) = \sum_{k=0}^{N_T} \sum_{i=0}^{N_C} \lambda_{g_i} \mathds{1}_{\tau_i = k} G_i(x_i) = \sum_{k=0}^{N_T} \vv{\Lambda}_k \cdot \vv{G_k}(x_k)
\end{equation*}

We can rewrite the operator $\Delta$ as a sum :
\begin{align*}
    \Delta(\phi_{m}, \phi_{b}, \lambda_g) = & \sum_{k=0}^{N_T - 1} \Delta_{k,k+1}(x_k, x_{k+1}) +  \frac{1}{h}  \,\vv{\Lambda_k} \cdot \vv{G_k} + \phi_{{\nu_k}}(x_k) \\
                                            & + \phi_{m_{N_T}}(x_{N_T}) + \frac{1}{h}  \, \vv{\Lambda_{N_T}} \cdot \vv{G_{N_T}}(x_{N_T})
\end{align*}
where only consecutive timesteps are grouped together. In particular, for a given $k$, we can separate this sum into three parts :
\begin{align*}
    \Delta(\phi_{m}, \phi_{p}, \phi_{d}, \lambda_g) = & \, \Delta_{k,k+1}(x_k, x_{k+1}) + \frac{1}{h}  \,  \vv{\Lambda_k} \cdot \vv{G_k}(x_k) + \phi_{{\nu_k}}(x_k) \\
                                                      & +  \frac{1}{h}  \,  \vv{\Lambda_{k+1}} \cdot \vv{G_{k+1}}(x_{k+1}) + \phi_{{\nu_{k+1}}}(x_{k+1})             \\
                                                      & + \Delta^u_k(x_k) + \Delta^d_{k+1}(x_{k+1})
\end{align*}
where $\Delta^u_k$ and $\Delta^d_k$ are given by :
\begin{align*}
    \Delta^u_k(x_k) & = \sum_{i=0}^{k-1} \Delta_{i,i+1}(x_i, x_{i+1}) +  \frac{1}{h}  \, \vv{\Lambda_i} \cdot \vv{G_i}(x_i) + \phi_{m_i}(x_i)                    \\
    \Delta^d_k(x_k) & = \sum_{i=k}^{N_T-1} \Delta_{i,i+1}(x_i, x_{i+1}) + \frac{1}{h}  \,  \vv{\Lambda_{i+1}} \cdot \vv{G_{i+1}}(x_i) + \phi_{m_{i+1}}(x_{i+1}).
\end{align*}
We further note :
\begin{align*}
    \overline{\Delta}_{k, k+1}(x_k, x_{k+1}) = & \, \Delta_{k,k+1}(x_k, x_{k+1}) +  \frac{1}{h}  \,  \vv{\Lambda_k} \cdot \vv{G_k}(x_k) + \phi_{{\nu_k}}(x_k) \\
                                               & +  \frac{1}{h}  \,  \vv{\Lambda_{k+1}} \cdot \vv{G_{k+1}}(x_{k+1}) + \phi_{{\nu_{k+1}}}(x_{k+1})             \\
\end{align*}
for simplicity.

Given that $\bPP^h$ is separable in the same fashion, we can compute the joint probability between steps $k$ and $k+1$ as :
\begin{align*}
    \PP^{h}_{k, k+1}(x_k, x_{k+1}) & = \int \PP^{h}(dx_{[0,k-1]}, x_k, x_{k+1}, dx_{[k+2, N_T]})                                                                              \\
                                         & = \int e^{(\Delta^u_k + \overline{\Delta}_{k, k+1} + \Delta^d_{k+1})/h} \rho_0 \prod_{i=0}^{N_T} \bPP_{i, i+1}^h dx_{[0, k-1]} dx_{[k+2, N_T]} \\
                                         & = \left(\int e^{\Delta^u_k/h} \rho_0 \prod_{i=0}^{k-1} \bPP_{i, i+1}^h dx_{[0, k-1]}\right)                                                    \\
                                         & \hphantom{=} \times e^{\overline{\Delta}_{k, k+1}/h} \bPP^h_{k, k+1}                                                                           \\
                                         & \hphantom{=} \times \left(\int e^{\Delta^d_{k+1}/h} \prod_{i=k+1}^{N_T} \bPP_{i, i+1}^h dx_{[k+2, N_T]}\right)                                 \\
                                         & = \exp(\psi^u_k(x_k) + \overline{\Delta}_{k, k+1}(x_k, x_{k+1})/h + \psi^d_{k+1}(x_{k+1})) \bPP_{k, k+1}^h(x_k, x_{k+1})
\end{align*}

Similarly as in the previous proof, we can compute the marginal as:
\begin{align*}
    \PP^{h}_k(x_k) & = \int \PP^{h}(dx_{-k}, x_k)                                                                                                                                      \\
                         & = \int e^{(\Delta^u_k + \phi_{{\nu_k}}(x_k) + \vv{\Lambda_k} \cdot \vv{G_k}(x_k)  + \Delta^d_{k})/h} \rho_0 \prod_{i=0}^{N_T} \bPP_{i, i+1}^h dx_{[0, k-1]} dx_{[k+2, N_T]} \\
                         & = \left(\int e^{\Delta^u_k/h} \rho_0 \prod_{i=0}^{k-1} \bPP_{i, i+1}^h dx_{[0, k-1]}\right)                                                                             \\
                         & \hphantom{=} \times e^{\phi_{{\nu_k}}(x_k) + \vv{\Lambda_k} \cdot \vv{G_k}(x_k)/h}                                                                                          \\
                         & \hphantom{=} \times \left(\int e^{\Delta^d_{k}/h} \prod_{i=k}^{N_T} \bPP_{i, i+1}^h dx_{[k+1, N_T]}\right)                                                              \\
                         & = \exp(\psi^u_k(x_k) + \phi_{{\nu_k}}(x_k) + \frac{1}{h}  \,  \vv{\Lambda_k} \cdot \vv{G_k}(x_k) + \psi^d_k(x_k))
\end{align*}

\end{document}